\numberwithin{equation}{section}  
\newtheorem{myDef}{Definition} 
\begin{document}

\title{Componentwise perturbation analysis for the generalized Schur decomposition \thanks{The work is supported by the National Natural Science Foundation of China (No. 11671060, 11771099), the Natural Science Foundation of Chongqing, China (No. cstc2019jcyj-msxmX0267) and the Innovation Program of Shanghai Municipal Education Commission.}
}


\author{Guihua Zhang        \and
        Hanyu Li  \and Yimin Wei 
}


\institute{ Guihua Zhang and Hanyu Li\at
              College of Mathematics and Statistics, Chongqing University, Chongqing, 401331, P.R. China \\
              \email{lihy.hy@gmail.com or hyli@cqu.edu.cn}           
           \and
           Yimin Wei \at
             School of Mathematical Sciences and Shanghai Key Laboratory of Contemporary Applied Mathematics, Fudan University, Shanghai, 200433, P.R. China\\
             \email{ymwei@fudan.edu.cn or yimin.wei@gmail.com} 
}

\date{Received: date / Accepted: date}

\maketitle

\begin{abstract}
By defining two important terms called basic perturbation vectors  and obtaining their linear bounds, we obtain the linear componentwise perturbation bounds for unitary factors and upper triangular factors of the generalized Schur decomposition. The perturbation bounds for the diagonal elements of the upper triangular factors and the generalized invariant subspace are also derived. From the former, we present an upper bound and a condition number of the generalized eigenvalue. Furthermore, with
	numerical iterative method, the nonlinear componentwise perturbation bounds of the generalized Schur decomposition are also provided. Numerical examples are given to test the obtained bounds. Among them, we compare our upper bound and condition number of the generalized eigenvalue with their counterparts given in the literature. Numerical results show that they are very close to each other but our results don't contain the information on the left and right generalized eigenvectors.
\keywords{generalized Schur decomposition\and linear componentwise perturbation bound\and nonlinear componentwise perturbation bound\and chordal metric\and condition number}
 \subclass{15A21\and 65F15 \and 93B35}
\end{abstract}

\section{Introduction}
\label{sec:int and no}	
As we know, for a complex square matrix $A\in \mathbb{C}^{n\times n}$, there exists a unitary matrix $U\in \mathbb{C}^{n\times n}$ and an upper triangular matrix $T\in \mathbb{C}^{n\times n}$ such that
\begin{align*}
		A=UTU^H.
	\end{align*}
This decomposition is called the Schur decomposition of the matrix $A$, which is an important and effective tool in numerical linear algebra (e.g.,\cite{golub2013matrix,stewart1990matrix,laub1979schur,zhou2007solutions}).

For the matrix pair $A\in \mathbb{C}^{n \times n}$ and $B\in \mathbb{C}^{n \times n}$, the generalized Schur decomposition is
\begin{equation}
	A=UTV^H,B=URV^H,
	\label{eq:sec1.1}
\end{equation}
where $T=(t_{ij})\in \mathbb{C}^{n\times n}$ and $R=(r_{ij})\in \mathbb{C}^{n \times n}$ are upper triangular matrices, and $U$ and $V$ are unitary matrices. Throughout this paper, we assume that the matrix pair $(A,B)$ is regular, that is, there exists $\lambda\in C$ satisfying det$(A-\lambda B)\not\equiv0$. Then the pairs $(t_{ii},r_{ii})$ or $\lambda_{i}=t_{ii}/r_{ii}$, where $t_{ii}$ and $r_{ii}$ are the diagonal elements of $T$ and $R$, respectively, are the generalized eigenvalues of the matrix pair $(A,B)$. We assume that the matrix pair $(A,B)$ has distinct eigenvalues throughout this paper. The generalized Schur decomposition also has many important applications in many fields
(e.g.,\cite{golub2013matrix,stewart1990matrix,kagstrom1989generalized,diao2013effective}).

There are many works on the applications, computations, and perturbation analysis for the Schur and generalized Schur decompositions (e.g.,\cite{chen2010perturbation,chen2012perturbation,petkov2021componentwise,konstantinov1994nonlocal,sun1995perturbation}). Here, we mainly focus on the perturbation analysis. In 1994, Konstantinov et al.\cite{konstantinov1994nonlocal} presented the perturbation bounds of the Schur decomposition by using the operator splitting, Lyapunov majorants and fixed point theorem. Soon afterwards, based on these techniques, Sun \cite{sun1995perturbation} presented the perturbation bounds of the generalized Schur decomposition. Later, some authors considered the perturbation analysis for the periodic Schur decomposition \cite{chen2010perturbation} and the antitriangular Schur decomposition \cite{chen2012perturbation}. All the above perturbation analysis belong to the normwise perturbation analysis, because they are obtained by using some 
matrix norms. As we know, normwise perturbation bounds cannot measure the abnormal perturbation of individual elements. Motivated by this, recently, Petkov \cite{petkov2021componentwise} presented the componentwise perturbation analysis of the Schur decomposition.

With the technique in \cite{petkov2021componentwise}, in this paper, we consider the componentwise perturbation analysis of the generalized Schur decomposition. One of the most important contribution is that we present an upper bound and a condition number of the generalized eigenvalue. Unlike the corresponding results in \cite{stewart1990matrix,fraysse1998note,higham1998structured}, they don't include the information on the left and right generalized eigenvectors. Numerical results show that our upper bound 
and condition number are close to their counterparts given in \cite{stewart1990matrix,fraysse1998note}. 

The rest of this paper is organized as follows.
In Section \ref{sec:pre}, we introduce some preliminaries which include some notations, the perturbation equations about the generalized Schur decomposition and the definitions of the basic perturbation vectors and their linear bounds. The linear perturbation bounds for $U$ and $V$, $T$ and $R$, diagonal elements of $T$ and  $R$, and generalized invariant subspace are presented in Sections \ref{sec:u and v}, \ref{sec:T and R},  \ref{diagonal}, and \ref{sec:invariant}, respectively. In Section \ref{sec:nonlinear}, we provide the nonlinear perturbation bounds of the generalized Schur decomposition. Finally, some numerical results are shown to test the obtained bounds.

\section{Preliminaries}
\label{sec:pre}
We first introduce some notations used in this paper. For the matrices $A$ and $B$, $A \otimes B$ denotes their Kronecker product. Given a matrix $A\in \mathbb{C}^{n \times n}$, $A^T,\overline{A}$, $A^H$, $|A|$, $\Vert A\Vert_{2}$ and $\Vert A\Vert_{F}$  denote its transpose, conjugate, conjugate transpose, absolute values, spectral norm and Frobenius norm, respectively. In addition, let $I_{n}$ be the $n \times n$ identity matrix,  $A_{i,: } $ be the  $ i $th row of $A$, $A_{:,j } $ be the  $ j $th column of $A$ and $A_{i_1:i_2,j_1:j_2 } $ be the part of $A$ consisting of rows from $i_1$ to $i_2$ and columns from $j_1$ to $j_2$.

For the matrix $A=\begin{bmatrix}
	a_{1} , a_{2} , \dots , a_{n}
\end{bmatrix}=(a_{ij}) \in \mathbb{C}^{n \times n}$, we denote the vector ${\rm vec}(A)$ by ${\rm vec}(A)=\begin{bmatrix}
	a_{1}^T , a_{2}^T , \dots , a_{n}^T
\end{bmatrix}^T$, the vector of the last $i$ elements of $a_j$ by $a_j^{[i]}$ and the upper triangular part of $A$ by ${\rm triu}(A)$. It is easy to check that 
\begin{equation}
	\vert {\rm triu}(AB)\vert={\rm triu}(\vert  AB\vert) \preceq {\rm triu}(\vert  A\vert\vert  B\vert),
	\label{triu}
\end{equation}
where $A\in \mathbb{C}^{n \times n}$ and $B\in \mathbb{C}^{n \times n}$.
Meanwhile,
\begin{equation}
	{\rm vec}(A^T)=\Pi_{n}{\rm vec}(A),
	\label{permutation}
\end{equation} 
where $\Pi_{n}$ is the vec-permutation matrix.
For simplicity of explicit expressions, we consider the notations in \cite{li2015improved},
\begin{align*}
	{\rm slvec}(A)=\begin{bmatrix}
		a_1^{[n-1]} \\ a_2^{[n-2]} \\ \vdots \\ a_{n-1}^{[1]} \\
	\end{bmatrix}\in \mathbb{C}^{\nu},
\end{align*}
\begin{align*}
	M_{{\rm slvec}}=\begin{bmatrix}
		{\rm diag}(J_1,J_2,\cdots,J_{n-1}) , 0_{\nu\times n}
	\end{bmatrix}\in \mathbb{C}^{\nu\times n^2},
	J_i=\begin{bmatrix}
			0_{(n-i)\times i},I_{n-i}
		\end{bmatrix}\in \mathbb{C}^{(n-i)\times n},
\end{align*}
where $\nu=\frac{n(n-1)}{2}$ and $0_{m\times n}$ is the $m\times n$ zero matrix. It is easy to show that  
\begin{equation}
	{\rm slvec}(A)=M_{{\rm slvec}}{\rm vec}(A).
	\label{lu} 
\end{equation}

Assume that the perturbed matrix pair $(\widetilde{A},\widetilde{B})$ has the following generalized Schur decomposition
\begin{equation}
	\widetilde{A}=A+\Delta A=\widetilde{U}\widetilde{T}\widetilde{V}^H,\widetilde{B}=B+\Delta B=\widetilde{U}\widetilde{R}\widetilde{V}^H,
	\label{eq:sec2.1}
\end{equation}
where
\begin{align*}
	\widetilde{U}=U+\Delta U,
	\widetilde{V}=V+\Delta V, \\
	\widetilde{T}=T+\Delta T,
	\widetilde{R}=R+\Delta R.
\end{align*}
To analyze the perturbation bounds of the generalized Schur decomposition conveniently, we first define two important matrices, that is,
\begin{equation}
	\Delta W=U^H\Delta U,\Delta K=V^H\Delta V.
	\label{eq:sec2.6}
\end{equation}
Further, we define the following two vectors 
\begin{equation}
	\begin{cases}
		x={\rm slvec}(\Delta W),\\
		y={\rm slvec}(\Delta K) ,
	\end{cases}
	\label{eq:sec2.7}
\end{equation}
and call them the basic perturbation vectors. Considering (\ref{lu}), (\ref{eq:sec2.7}) can be written as
\begin{align*}
	\begin{cases}
		x=M_{{\rm slvec}}{\rm vec}(\Delta W),\\
		y=M_{{\rm slvec}}{\rm vec}(\Delta K).
	\end{cases}
\end{align*}
In the following, we will obtain the linear bounds of $x$ and $y$. To this end, let
\begin{align*}
	U&=\begin{bmatrix} u_{1},u_{2}, \dots , u_{n}\end{bmatrix},\Delta U=\begin{bmatrix}
		\Delta u_{1},\Delta u_{2},\dots,\Delta u_{n}
	\end{bmatrix}, \\
   V&=\begin{bmatrix}v_{1} , v_{2} , \dots , v_{n}\end{bmatrix},\Delta V=\begin{bmatrix}
   	\Delta v_{1},\Delta v_{2},\dots,\Delta v_{n}
   \end{bmatrix},
\end{align*}
where $u_{j}$ and $v_{j}$ are the $j$th column vectors of $U$ and $V$, respectively.
Thus, based on the relationship between $U$ and $\widetilde U$, and the relationship between $V$ and $\widetilde V$, we have 
\begin{align*}
	\widetilde{U}&=\begin{bmatrix}
		\widetilde{u}_{1},\widetilde{u}_{2},\dots,\widetilde{u}_{n}
	\end{bmatrix},\widetilde{u}_{j}=u_{j}+\Delta u_{j}, \\
\widetilde{V}&=\begin{bmatrix}
	\widetilde{v}_{1},\widetilde{v}_{2},\dots, \widetilde{v}_{n}
\end{bmatrix},\widetilde{v}_{j}=v_{j}+\Delta v_{j}.
\end{align*}
Since $T,R,\widetilde{T}$ and $\widetilde{R}$ are upper triangular matrices, considering (\ref{eq:sec1.1}) and (\ref{eq:sec2.1}), it follows that
\begin{align*}
	\widetilde{u}_{i}^H(A+\Delta A)\widetilde{v}_{j}&=u_{i}^HAv_{j}=0,\\
	\widetilde{u}_{i}^H(B+\Delta B)\widetilde{v}_{j}&=u_{i}^HBv_{j}=0,
\end{align*} 
where $1\leq j<i\leq n$.
Hence
\begin{equation}
	\begin{cases}
		u_{i}^HA\Delta v_{j}+\Delta u_{i}^HAv_{j}+\Delta u_{i}^HA\Delta v_{j}=-\widetilde{u}_{i}^H\Delta A\widetilde{v}_{j}, \\
		u_{i}^HB\Delta v_{j}+\Delta u_{i}^HBv_{j}+\Delta u_{i}^HB\Delta v_{j}=-\widetilde{u}_{i}^H\Delta B\widetilde{v}_{j}.	
	\end{cases}
	\label{eq:sec2.2}
\end{equation}
Note that the componentwise representation of (\ref{eq:sec1.1}) can be written by
\begin{equation}
	\begin{cases}
		u_{i}^HA=\sum_{k=i}^n t_{ik}v_{k}^H, \\
		Av_{j}=\sum_{k=1}^j t_{kj}u_{k},
	\end{cases}
	\begin{cases}
		u_{i}^HB=\sum_{k=i}^n r_{ik}v_{k}^H, \\
		Bv_{j}=\sum_{k=1}^j r_{kj}u_{k},
	\end{cases}
	\label{eq:sec2.3}
\end{equation}
where $1\leq j<i\leq n$.
Thus by combining (\ref{eq:sec2.2}) with (\ref{eq:sec2.3}), we get 
\begin{equation}
	\begin{cases}
		\sum_{k=i}^n t_{ik}v_{k}^H\Delta v_{j}+\sum_{k=1}^j t_{kj}\Delta u_{i}^Hu_{k}+\Delta u_{i}^HA\Delta v_{j}=-\widetilde{u}_{i}^H\Delta A \widetilde{v}_{j}, \\
		\sum_{k=i}^n r_{ik}v_{k}^H\Delta v_{j}+\sum_{k=1}^j r_{kj}\Delta u_{i}^Hu_{k}+\Delta u_{i}^HB\Delta v_{j}=-\widetilde{u}_{i}^H\Delta B \widetilde{v}_{j},
	\end{cases}	
	\label{eq:sec2.4}
\end{equation}
where $1\leq j<i\leq n$.
Because of the unitarity of $U$ and $\widetilde U$, it follows that
\begin{align*}
	U^H\Delta U=-\Delta U^HU-\Delta U^H\Delta U,
\end{align*}
whose componentwise representation can be written by
\begin{equation}
	\Delta u_{i}^Hu_{j}=-u_{i}^H\Delta u_{j}-\Delta u_{i}^H\Delta u_{j}.
	\label{delta uiuj}
\end{equation}
Substitute (\ref{delta uiuj}) into (\ref{eq:sec2.4})  getting the following equations
\begin{equation}
	\begin{cases}
		\sum_{k=i}^n t_{ik}v_{k}^H\Delta v_{j}-\sum_{k=1}^j t_{kj}u_{i}^H\Delta u_{k}-\sum_{k=1}^j t_{kj}\Delta u_{i}^H\Delta u_{k}+\Delta u_{i}^HA\Delta v_{j}=-\widetilde{u}_{i}^H\Delta A \widetilde{v}_{j}, \\
		\sum_{k=i}^n r_{ik}v_{k}^H\Delta v_{j}-\sum_{k=1}^j r_{kj}u_{i}^H\Delta u_{k}-\sum_{k=1}^j r_{kj}\Delta u_{i}^H\Delta u_{k}+\Delta u_{i}^HB\Delta v_{j}=-\widetilde{u}_{i}^H\Delta B \widetilde{v}_{j},
	\end{cases}	
	\label{eq:sec2.5}
\end{equation}
where $1\leq j<i\leq n$.

The (\ref{eq:sec2.5}) is very important to find the linear bounds of the basic perturbation vectors $x$ and $y$.
To continue, we need to write it as a matrix-vector equation. Some notations are necessary.
\begin{align*}
	F&=-\widetilde U^H\Delta A\widetilde V,f={\rm slvec}(F)=M_{{\rm slvec}}{\rm vec}(F),\\
	G&=-\widetilde U^H\Delta B\widetilde V,g={\rm slvec}(G)=M_{{\rm slvec}}{\rm vec}(G), \\
	\Delta^x_{l}&=\sum_{k=1}^j t_{kj}\Delta u_{i}^H\Delta u_{k}-\Delta u_{i}^HA\Delta v_{j}, \\
	\Delta^y_{l}&=\sum_{k=1}^j r_{kj}\Delta u_{i}^H\Delta u_{k}-\Delta u_{i}^HB\Delta v_{j},
\end{align*}
where $1\leq j<i\leq n$ and $l=i+(j-1)n-\frac{j(j+1)}{2}$. 
Then, (\ref{eq:sec2.5}) can be written as
\begin{equation}
	\begin{cases}
			-L_{T,X}x+L_{T,Y}y=f+\Delta^x, \\
			-L_{R,X}x+L_{R,Y}y=g+\Delta^y,
	\end{cases}
	\label{eq:sec2.8}
\end{equation}
where the vectors $\Delta^x\in \mathbb{C}^{\nu}$ and $\Delta^y\in \mathbb{C}^{\nu}$ consist of $\Delta^x_{l}$ and $\Delta^y_{l}$, respectively, and
\begin{align*}
	L_{T,X}&=M_{\rm slvec}(T^T\otimes I_n)M_{\rm slvec}^T\in \mathbb{C}^{\nu\times\nu}, L_{T,Y}=M_{\rm slvec}(I_n \otimes T)M_{\rm slvec}^T\in \mathbb{C}^{\nu\times\nu}, \\
	L_{R,X}&=M_{\rm slvec}(R^T\otimes I_n)M_{\rm slvec}^T\in \mathbb{C}^{\nu\times\nu}, L_{R,Y}=M_{\rm slvec}(I_n \otimes R)M_{\rm slvec}^T\in \mathbb{C}^{\nu\times\nu}.
\end{align*}
Notice that $L_{T,X}$ and $L_{R,X}$ are block lower triangular matrices, and $L_{T,Y}$ and $L_{R,Y}$ are block diagonal matrices. For example, for $n=5$, the matrices $L_{T,X}$ and $L_{T,Y}$ have the following forms
\begin{align*}
	L_{T,X}&=\begin{bmatrix}
		t_{11} & 0 & 0 & 0 & 0 & 0 & 0 & 0 & 0 & 0\\
		0 & t_{11} & 0 & 0 & 0 & 0 & 0 & 0 & 0 & 0\\
		0 & 0 & t_{11} & 0 & 0 & 0 & 0 & 0 & 0 & 0\\
		0 & 0 & 0 & t_{11} & 0 & 0 & 0 & 0 & 0 & 0\\
		0 & t_{12} & 0 & 0 & t_{22} & 0 & 0 & 0 & 0 & 0\\
		0 & 0 & t_{12} & 0 & 0 & t_{22} & 0 & 0 & 0 & 0\\
		0 & 0 & 0 & t_{12} & 0 & 0 & t_{22} & 0 & 0 & 0\\
		0 & 0 & t_{13} & 0 & 0 & t_{23} & 0 & t_{33} & 0 & 0\\
		0 & 0 & 0 & t_{13} & 0 & 0 & t_{23} & 0 & t_{33} & 0\\
		0 & 0 & 0 & t_{14} & 0 & 0 & t_{24} & 0 & t_{34} & t_{44}\\
	\end{bmatrix},\\
	L_{T,Y}&=\begin{bmatrix}
		t_{22} & t_{23} & t_{24} & t_{25} & 0 & 0 & 0 & 0 & 0 & 0\\
		0 & t_{33} & t_{34} & t_{35} & 0 & 0 & 0 & 0 & 0 & 0\\
		0 & 0 & t_{44} & t_{45} & 0 & 0 & 0 & 0 & 0 & 0\\
		0 & 0 & 0 & t_{55} & 0 & 0 & 0 & 0 & 0 & 0\\
		0 & 0 & 0 & 0 & t_{33} & t_{34} & t_{35} & 0 & 0 & 0\\
		0 & 0 & 0 & 0 & 0 & t_{44} & t_{45} & 0 & 0 & 0\\
		0 & 0 & 0 & 0 & 0 & 0 & t_{55} & 0 & 0 & 0\\
		0 & 0 & 0 & 0 & 0 & 0 & 0 & t_{44} & t_{45} & 0\\
		0 & 0 & 0 & 0 & 0 & 0 & 0 & 0 & t_{55} & 0\\
		0 & 0 & 0 & 0 & 0 & 0 & 0 & 0 & 0 & t_{55}\\
	\end{bmatrix}.
\end{align*}
Let
\begin{equation}
	L=\begin{bmatrix}
		-L_{T,X} & L_{T,Y}\\
		-L_{R,X} & L_{R,Y}
	\end{bmatrix}=\begin{bmatrix}
		-M_{\rm slvec} & 0 \\ 0 & -M_{\rm slvec}
	\end{bmatrix}\begin{bmatrix}
		T^T\otimes I_n & I_n\otimes T \\
		R^T\otimes I_n & I_n\otimes R
	\end{bmatrix}\begin{bmatrix}
		M_{\rm slvec}^T & 0 \\ 0 & M_{\rm slvec}^T
	\end{bmatrix}\in \mathbb{C}^{2\nu\times2\nu}.
	\label{L}
\end{equation}
Then (\ref{eq:sec2.8}) can be simplified as 
\begin{equation}
	L\begin{bmatrix}
			x \\ y
		\end{bmatrix}=\begin{bmatrix}
		f \\ g
	\end{bmatrix}+\begin{bmatrix}
			\Delta^x \\ \Delta^y
		\end{bmatrix}.
	\label{eq:sec2.9}
\end{equation} 
Note that $L$ is nonsingular \cite{sun1995perturbation}. Thus, neglecting the second order terms, we obtain the linear approximation of $x$ and $y$,
\begin{equation}
	\begin{bmatrix}
		x\\y
	\end{bmatrix}_{lin}=L^{-1}\begin{bmatrix}
		f\\g
	\end{bmatrix},
	\label{LNI1}
\end{equation}	
which implies that the $l$th component of 
$\begin{bmatrix}
	x\\y
\end{bmatrix}_{lin}$
can be written as
\begin{equation}
	\begin{bmatrix}
		x\\y
	\end{bmatrix}_{lin,l}=L^{-1}_{l,:}\begin{bmatrix}
		f\\g
	\end{bmatrix},
	\label{LNI}
\end{equation}
where $l=1,2,\dots,2\nu$.
Since
\begin{align*}
	\begin{cases}
		\Vert f\Vert_{2}\leq \Vert \Delta A\Vert_{F}, \\
		\Vert g\Vert_{2}\leq \Vert \Delta B\Vert_{F},
	\end{cases}
\end{align*}
we have the linear bound of the $l$th component 
\begin{equation}
	\left\vert \begin{bmatrix}
		x\\y
	\end{bmatrix}_{lin,l} \right\vert\leq \Vert L^{-1}_{l,:}\Vert_{2}\left\Vert\begin{bmatrix}
		\Delta A \\ \Delta B
	\end{bmatrix}\right\Vert_{F},
\end{equation}
and hence the linear bounds of the $j$th component of vectors $x$ and $y$
\begin{equation}
	\vert x_{lin,j}\vert \leq \Vert L^{-1}_{j,:}\Vert_{2}\left\Vert\begin{bmatrix}
		\Delta A \\ \Delta B
	\end{bmatrix}\right\Vert_{F},
	\vert y_{lin,j}\vert \leq \Vert L^{-1}_{j+\nu,:}\Vert_{2}\left\Vert\begin{bmatrix}
		\Delta A \\ \Delta B
	\end{bmatrix}\right\Vert_{F},
	\label{eq:sec2.10}
\end{equation}
where $j=1,2,\dots,\nu$.

The above discussions are summarized in the following theorem.
\begin{theorem} \label{theorem1}
	Let $(A,B)$ and $(\widetilde{A},\widetilde{B})$ have the generalized Schur decompositions in  (\ref{eq:sec1.1}) and (\ref{eq:sec2.1}), respectively. 
	For $x$ and $y$ defined in (\ref{eq:sec2.7}), the linear bounds (\ref{eq:sec2.10}) hold.
\end{theorem}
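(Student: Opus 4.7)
The statement essentially packages up the derivation that was carried out in the lead-in to the theorem, so the plan is to retrace those steps in a clean order and identify where the real work sits. First I would start from the two generalized Schur decompositions in (\ref{eq:sec1.1}) and (\ref{eq:sec2.1}) and exploit the fact that $T,R,\widetilde T,\widetilde R$ are upper triangular to deduce that $\widetilde u_i^H(A+\Delta A)\widetilde v_j=u_i^HAv_j=0$ and the analogous identity for $B$, for all $1\le j<i\le n$. Expanding these identities and substituting the componentwise form (\ref{eq:sec2.3}) of the original decomposition yields the scalar system (\ref{eq:sec2.4}). Then, using the unitarity relation (\ref{delta uiuj}) to trade $\Delta u_i^H u_k$ for $-u_i^H\Delta u_k$ plus a quadratic remainder, one arrives at the coupled system (\ref{eq:sec2.5}).

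Next I would assemble (\ref{eq:sec2.5}) into matrix-vector form. The strictly lower-triangular index pattern $1\le j<i\le n$ is precisely what the operator ${\rm slvec}$ and the selector $M_{\rm slvec}$ defined in (\ref{lu}) are built to capture, so applying them to the vectorized version of the equations — with the inner sums rewritten using the Kronecker identities $\mathrm{vec}(XAY)=(Y^T\otimes X)\mathrm{vec}(A)$ — produces the four blocks $L_{T,X}$, $L_{T,Y}$, $L_{R,X}$, $L_{R,Y}$ displayed in the excerpt. The right-hand side collects into $f$ and $g$ together with a quadratic remainder vector $(\Delta^x,\Delta^y)$, giving the compact form (\ref{eq:sec2.9}) with $L$ as in (\ref{L}).

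At this point the one genuinely substantive step is invoking nonsingularity of $L$; I would simply cite Sun's result in \cite{sun1995perturbation}, which guarantees invertibility under the standing regularity and distinct-eigenvalue assumptions on $(A,B)$. Dropping the second-order terms $\Delta^x,\Delta^y$ then yields the linear approximation (\ref{LNI1}), so that the $l$th component of $(x^T,y^T)^T_{lin}$ is $L^{-1}_{l,:}(f^T,g^T)^T$.

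Finally, to turn this into the componentwise bound, I would bound each entry by Cauchy--Schwarz: $|L^{-1}_{l,:}(f^T,g^T)^T|\le\|L^{-1}_{l,:}\|_2\,\|(f^T,g^T)^T\|_2$. Since ${\rm slvec}$ extracts a subset of the entries of the unitarily-invariant quantities $-\widetilde U^H\Delta A\widetilde V$ and $-\widetilde U^H\Delta B\widetilde V$, one has $\|f\|_2\le\|\Delta A\|_F$ and $\|g\|_2\le\|\Delta B\|_F$, hence $\|(f^T,g^T)^T\|_2\le\|(\Delta A^T,\Delta B^T)^T\|_F$. Splitting the row index according to whether $l\le\nu$ or $l>\nu$ then gives exactly the two inequalities in (\ref{eq:sec2.10}). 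The main obstacle, aside from careful bookkeeping of indices, is organizing the ${\rm slvec}$/Kronecker manipulations so that the block structure of $L$ emerges cleanly; the invertibility of $L$ itself is delegated to the cited reference.
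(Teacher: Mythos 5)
Your proposal is correct and follows essentially the same route as the paper: deriving the scalar system (\ref{eq:sec2.5}) from the two decompositions and unitarity, assembling it into the matrix--vector equation (\ref{eq:sec2.9}) with $L$ as in (\ref{L}), citing Sun \cite{sun1995perturbation} for the nonsingularity of $L$, discarding the second-order terms, and applying Cauchy--Schwarz together with $\Vert f\Vert_2\leq\Vert\Delta A\Vert_F$ and $\Vert g\Vert_2\leq\Vert\Delta B\Vert_F$ to obtain (\ref{eq:sec2.10}). This is precisely the derivation the paper carries out in the discussion preceding the theorem, which it then summarizes as Theorem \ref{theorem1}.
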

In the following sections, we will use Theorem \ref{theorem1}, i.e., the bounds (\ref{eq:sec2.10}), to investigate the componentwise perturbation bounds of the generalized Schur decomposition.

\section{Linear perturbation bounds for $U$ and $V$} \label{sec:u and v}
We first rewrite the matrices $\Delta W$ and $\Delta K$ in (\ref{eq:sec2.6}) as the sum of matrices whose elements only contain the first or second order terms.
To do this, note that $\widetilde{U}$ and $U$ are unitary matrices, then 
\[\Delta u_{i}^Hu_{j}=-u_{i}^H\Delta u_{j}-\Delta u_{i}^H\Delta u_{j}\]
or
\[\overline{ u_{j}^H\Delta u_{i}}=-u_{i}^H\Delta u_{j}-\Delta u_{i}^H\Delta u_{j},\]
where $\overline{ u_{j}^H\Delta u_{i}}$ with $1\leq i<j\leq n$ is the conjugate of the element $x_{l}$, that is $\overline{ u_{j}^H\Delta u_{i}}=\overline{x}_l$ with $l=j+(i-1)n-\frac{i(i+1)}{2}$. For the  matrices $\widetilde{V}$ and $V$, similar results hold. Thus, $\Delta W$ and $\Delta K$ can be written as 
\begin{equation}
	\begin{cases}
		\Delta W=\Delta W_{1}+\Delta W_{2}-\Delta W_{3}, \\
		\Delta K=\Delta K_{1}+\Delta K_{2}-\Delta K_{3} ,
	\end{cases}
	\label{eq:sec3.6}
\end{equation}
where 
\begin{align*}
    \Delta W_{1}=\begin{bmatrix}
	0 & -\overline x_{1} & -\overline x_{2} & \dots & -\overline x_{n-1} \\
	x_{1} & 0 & -\overline x_{1} & \dots & -\overline x_{2n-3} \\
	x_{2} & x_{n} & 0 & \dots & -\overline x_{3n-6}  \\
	\vdots & \vdots & \vdots & \ddots & \vdots\\ 
	x_{n-1} & x_{2n-3} & x_{3n-6} & \dots & 0
\end{bmatrix}\in \mathbb{C}^{n \times n},
\end{align*}
\begin{equation}
	\Delta W_{2}={\rm diag}(u_{1}^H\Delta u_{1},u_{2}^H\Delta u_{2},\dots,u_{n}^H\Delta u_{n})\in \mathbb{C}^{n \times n},
	\label{delta w2}
\end{equation}
\begin{equation}
	\Delta W_{3}=\begin{bmatrix}
		0 & \Delta u_{1}^H\Delta u_{2} & \Delta u_{1}^H\Delta u_{3} & \dots & \Delta u_{1}^H\Delta u_{n} \\
		0 & 0 & \Delta u_{2}^H\Delta u_{3} & \dots & \Delta u_{2}^H\Delta u_{n} \\
		0 & 0 & 0 & \dots & \Delta u_{3}^H\Delta u_{n} \\
		\vdots & \vdots & \vdots & \ddots & \vdots\\
		0 & 0 & 0 & \dots & 0 
	\end{bmatrix}\in \mathbb{C}^{n \times n},
	\label{delta w3}
\end{equation}
and $\Delta K_{1},\Delta K_{2}$ and $\Delta K_{3}$ are the same as $\Delta W_{1},\Delta W_{2}$ and $\Delta W_{3}$, respectively, except that $x_{i}$ is replaced by $y_{i}$ and $\Delta u_{j}$ is replaced by $\Delta v_{j}$.
Obviously, the factors $\Delta W_{3}$ and $\Delta K_{3}$ are comprised of the high order terms.

To obtain the desired bounds, we assume that the perturbed unitary matrices $\widetilde{U}$ and $\widetilde{V}$ are such that the imaginary parts of diagonal elements of $\Delta W_{2}$ and $\Delta K_{2}$ are zero. This assumption also appears in \cite{petkov2021componentwise} and is common in perturbation analysis for Schur and generalized Schur decompositions (e.g.,\cite{konstantinov1994nonlocal,sun1995perturbation}). Then it's easy to show that $u_{i}^H\Delta u_{i}=-\frac{\Delta u_{i}^H\Delta u_{i}}{2}$ and $v_{i}^H\Delta v_{i}=-\frac{\Delta v_{i}^H\Delta v_{i}}{2}$. Hence the elements of $\Delta W_{2}$ and $\Delta K_{2}$ are also the high order terms.
Thus, together with (\ref{eq:sec2.10}), we can obtain the linear bounds of $\Delta W$ and $\Delta K$, i.e., $\vert \widehat{\Delta W} \vert$ and $\vert \widehat{\Delta K} \vert$, which are given as follows:
\begin{equation}
	\vert \widehat{\Delta W} \vert =\begin{bmatrix}
		0 & \vert x_{1} \vert & \vert x_{2}\vert & \dots & \vert x_{n-1} \vert \\
		\vert x_{1} \vert & 0 & \vert x_{n} \vert & \dots & \vert x_{2n-3} \vert \\
		\vert x_{2} \vert & \vert x_{n} \vert & 0 & \dots & \vert x_{3n-6} \vert \\
		\vdots & \vdots & \vdots & \ddots & \vdots\\
		\vert x_{n-1} \vert & \vert x_{2n-3} \vert &
		\vert x_{3n-6} \vert & \dots & 0
	\end{bmatrix},
	\label{Delta W1}
\end{equation}
\begin{equation}
	\vert \widehat{\Delta K} \vert=\begin{bmatrix}
		0 & \vert y_{1} \vert & \vert y_{2}\vert & \dots & \vert y_{n-1} \vert \\
		\vert y_{1} \vert & 0 & \vert y_{n} \vert & \dots & \vert y_{2n-3} \vert \\
		\vert y_{2} \vert & \vert y_{n} \vert & 0 & \dots & \vert y_{3n-6} \vert \\
		\vdots & \vdots & \vdots & \ddots & \vdots\\
		\vert y_{n-1} \vert & \vert y_{2n-3} \vert &
		\vert y_{3n-6} \vert & \dots & 0
	\end{bmatrix}.
	\label{Delta K1}
\end{equation}
Since $U$ and $V$ are unitary matrices, the linear perturbation bounds of $U$ and $V$ follow as
\begin{equation}
	\begin{cases}
		\vert \Delta U \vert \preceq  \vert U\vert\vert U^H\Delta U\vert =\vert U\vert\vert \widehat{\Delta W}\vert, \\
		\vert \Delta V \vert \preceq \vert V\vert\vert V^H\Delta V\vert =\vert V\vert\vert \widehat{\Delta K}\vert .
	\end{cases}
	\label{eq:sec3.7}
\end{equation}

In summary, we have the following theorem.
\begin{theorem} \label{theorem3.1}
	Let $(A,B)$ and $(\widetilde{A},\widetilde{B})$ have the generalized Schur decompositions in (\ref{eq:sec1.1}) and (\ref{eq:sec2.1}), respectively. 
	Then the linear perturbation bounds of $U$ and $V$ given in (\ref{eq:sec3.7}) hold. 
\end{theorem}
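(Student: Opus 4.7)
The plan is to exploit the unitarity identity $\Delta U = UU^H \Delta U = U\Delta W$ (with $\Delta W$ as in \eqref{eq:sec2.6}) together with the decomposition \eqref{eq:sec3.6}, which isolates the linear content of $\Delta W$. First, I would deduce the outer inequality $|\Delta U| \preceq |U|\,|U^H \Delta U|$ directly: since $UU^H = I_n$, one has $\Delta U = U\Delta W$, and applying the triangle inequality entrywise to the matrix product gives the bound. The identical argument with $V$ in place of $U$ produces $|\Delta V| \preceq |V|\,|V^H \Delta V|$. These deliver the left-hand relations in \eqref{eq:sec3.7} regardless of any normalization.

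The substantive step is to show that, up to first order in the perturbation, $|U^H\Delta U|$ coincides with the matrix $|\widehat{\Delta W}|$ displayed in \eqref{Delta W1}. The decomposition \eqref{eq:sec3.6} splits $\Delta W$ as $\Delta W_1 + \Delta W_2 - \Delta W_3$. The entries of $\Delta W_3$ are inner products $\Delta u_i^H \Delta u_j$, hence manifestly second order in $\Delta A$ and $\Delta B$ through the dependence of each $\Delta u_i$ on the data perturbations. To handle $\Delta W_2$, I would invoke the normalization assumption made in the paragraph following \eqref{delta w3}, together with $\widetilde u_i^H \widetilde u_i = 1$, to conclude that $2\,\mathrm{Re}(u_i^H \Delta u_i) + \Delta u_i^H \Delta u_i = 0$ and hence $u_i^H \Delta u_i = -\tfrac{1}{2}\Delta u_i^H \Delta u_i$, which is also second order. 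The surviving first-order part is therefore $\Delta W_1$, whose entries, up to signs and conjugation, are exactly the components of the basic perturbation vector $x$ from \eqref{eq:sec2.7}. Taking componentwise absolute values recovers $|\widehat{\Delta W}|$ as in \eqref{Delta W1}, and the identical reasoning applied to $\Delta K$ (with $u_i, \Delta u_i, x$ replaced by $v_i, \Delta v_i, y$) yields $|\widehat{\Delta K}|$ as in \eqref{Delta K1}.

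Combining the two steps delivers \eqref{eq:sec3.7} in the linear regime, with the explicit entries of $|\widehat{\Delta W}|$ and $|\widehat{\Delta K}|$ bounded through Theorem \ref{theorem1}. The main delicacy is justifying the dismissal of $\Delta W_2$ as higher order: without the normalization on the imaginary part of its diagonal entries, $u_i^H \Delta u_i$ would carry a genuine first-order imaginary contribution that could not be absorbed into quadratic remainders and would spoil the identification of $\Delta W$ with $\Delta W_1$ at linear order. This normalization, standard in the Schur and generalized Schur perturbation literature, is what lets the whole argument go through.
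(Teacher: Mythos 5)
Your argument is correct and follows essentially the same route as the paper: decompose $\Delta W=\Delta W_1+\Delta W_2-\Delta W_3$ as in (\ref{eq:sec3.6}), use the normalization on the diagonal of $\Delta W_2$ to show $u_i^H\Delta u_i=-\tfrac12\Delta u_i^H\Delta u_i$ so that only $\Delta W_1$ survives at first order, and then apply $|\Delta U|=|UU^H\Delta U|\preceq |U|\,|\widehat{\Delta W}|$ (and likewise for $V$). Your closing remark on why the normalization is indispensable is a correct reading of the assumption the paper makes just after (\ref{delta w3}).
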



\section{Linear perturbation bounds for $T$ and $R$}
\label{sec:T and R} 
\begin{theorem} \label{theorem4.1}
	Let $(A,B)$ and $(\widetilde{A},\widetilde{B})$ have the generalized Schur decompositions in (\ref{eq:sec1.1}) and (\ref{eq:sec2.1}), respectively. Then the linear perturbation bounds for $T$ and $R$ are 
	\begin{equation}
		\begin{split}
			\vert \widehat{\Delta T}\vert ={\rm triu}&(\vert U^H\vert (\vert A\vert+\vert \Delta  A\vert)\vert V\vert \vert \widehat{\Delta K}\vert+\vert \widehat{\Delta W}^H\vert\vert U^H\vert (\vert A\vert+\vert \Delta  A\vert)\vert V\vert \\
			&+\vert \widehat{\Delta W}^H\vert\vert U^H\vert (\vert A\vert+\vert \Delta  A\vert)\vert V\vert\vert \widehat{\Delta K}\vert+\vert U^H\vert\vert \Delta  A\vert\vert V\vert),
		\end{split}	
		\label{4.3}
	\end{equation}
	and
	\begin{equation}
		\begin{split}
			\vert \widehat{\Delta R}\vert ={\rm triu}&(\vert U^H\vert (\vert B\vert+\vert \Delta  B\vert)\vert V\vert \vert \widehat{\Delta K}\vert+\vert \widehat{\Delta W}^H\vert\vert U^H\vert (\vert B\vert+\vert \Delta  B\vert)\vert V\vert \\
			&+\vert \widehat{\Delta W}^H\vert\vert U^H\vert (\vert B\vert+\vert \Delta  B\vert)\vert V\vert\vert \widehat{\Delta K}\vert+\vert U^H\vert\vert \Delta  B\vert\vert V\vert),
		\end{split}	
		\label{4.4}
	\end{equation}
	where $\vert \widehat{\Delta W}\vert$ and $\vert \widehat{\Delta K}\vert$ are given in (\ref{Delta W1}) and (\ref{Delta K1}), respectively.
\end{theorem}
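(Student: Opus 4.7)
The plan is to derive the bounds (\ref{4.3}) and (\ref{4.4}) by a direct expansion of $\widetilde T-T$ and $\widetilde R-R$ in terms of $\Delta A,\Delta B,\Delta W,\Delta K$, and then to substitute the linear surrogates $|\widehat{\Delta W}|$ and $|\widehat{\Delta K}|$ from Section \ref{sec:u and v} at the end. Since the argument for $R$ is identical to that for $T$ with $A$ replaced by $B$, I will only sketch the steps for (\ref{4.3}).

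\textbf{Step 1 (exact difference).} From (\ref{eq:sec1.1}) and (\ref{eq:sec2.1}) together with the unitarity of $U,V,\widetilde U,\widetilde V$, I would start from $T=U^H A V$ and $\widetilde T=\widetilde U^H\widetilde A\widetilde V$, so
\[
\Delta T=(U+\Delta U)^H(A+\Delta A)(V+\Delta V)-U^H A V.
\]
Expanding the product and cancelling $U^H A V$ yields four pieces: $U^H\Delta A\,V$, $U^H(A+\Delta A)\Delta V$, $\Delta U^H(A+\Delta A)V$, and $\Delta U^H(A+\Delta A)\Delta V$. Next I would use the relations $\Delta U=U\Delta W$ and $\Delta V=V\Delta K$ (both of which follow at once from (\ref{eq:sec2.6}) together with $UU^H=I_n$ and $VV^H=I_n$) to rewrite the three $\Delta U$ and $\Delta V$ pieces in terms of $\Delta W$ and $\Delta K$, i.e.\ $\Delta U^H=\Delta W^H U^H$ and $\Delta V=V\Delta K$.

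\textbf{Step 2 (triu and componentwise bound).} Because $T$ and $\widetilde T$ are both upper triangular, so is $\Delta T$, which gives the identity $|\Delta T|={\rm triu}(|\Delta T|)$. Applying ${\rm triu}$ to the expansion from Step 1, taking absolute values term by term and using $|A+\Delta A|\preceq |A|+|\Delta A|$ together with the submultiplicativity of $|\cdot|$ and the key inequality (\ref{triu}), I would obtain
\[
\begin{split}
|\Delta T|\preceq{\rm triu}\bigl(&|U^H||\Delta A||V|+|U^H|(|A|+|\Delta A|)|V||\Delta K|\\
&+|\Delta W^H||U^H|(|A|+|\Delta A|)|V|+|\Delta W^H||U^H|(|A|+|\Delta A|)|V||\Delta K|\bigr).
\end{split}
\]

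\textbf{Step 3 (linearization).} The final step is to replace $|\Delta W|$ and $|\Delta K|$ by the linear surrogates $|\widehat{\Delta W}|$ and $|\widehat{\Delta K}|$ given in (\ref{Delta W1})--(\ref{Delta K1}), which are the bounds coming from discarding the second order factors $\Delta W_2-\Delta W_3$ and $\Delta K_2-\Delta K_3$ in (\ref{eq:sec3.6}) and then using the componentwise estimates (\ref{eq:sec2.10}) on $x$ and $y$. This substitution produces the right-hand side of (\ref{4.3}). Repeating the entire argument verbatim with $B,R,\widetilde R$ in place of $A,T,\widetilde T$ yields (\ref{4.4}).

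The argument is mostly bookkeeping; the only delicate point is to keep the ${\rm triu}$ on the \emph{outside} of the whole bound (exploiting that $\Delta T$ itself is upper triangular) rather than applying it separately to each term, which would give the same formula but obscures why the cross-product $|\Delta W^H||U^H|(|A|+|\Delta A|)|V||\Delta K|$ must be retained: it is not an ordinary second order remainder, but comes from the exact expansion and is only rendered ``linear'' once the first order surrogates $|\widehat{\Delta W}|$ and $|\widehat{\Delta K}|$ are inserted for $|\Delta W|$ and $|\Delta K|$.
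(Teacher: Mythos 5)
Your proposal is correct and follows essentially the same route as the paper: expand $\Delta T=\widetilde U^H\widetilde A\widetilde V-U^HAV$, use that $\Delta T$ is upper triangular to apply ${\rm triu}$ together with the inequality (\ref{triu}), and then bound $|\Delta U|\preceq|U||\widehat{\Delta W}|$ and $|\Delta V|\preceq|V||\widehat{\Delta K}|$ via (\ref{eq:sec3.7}). The only cosmetic difference is that you group the seven expansion terms into four factored pieces before taking absolute values, whereas the paper keeps them separate until the final regrouping.
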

\begin{proof}
	From (\ref{eq:sec2.1}), $\Delta T$ can be written as
	\begin{align*}
	    \Delta T=\widetilde{T}-T=\widetilde{U}^H\widetilde{A}\widetilde{V}-U^HAV.
	\end{align*}
	Then
	\begin{equation}
		\Delta T=U^HA\Delta V+\Delta U^HAV+\Delta U^HA\Delta V+U^H\Delta AV+U^H\Delta A\Delta V+\Delta U^H\Delta AV+\Delta U^H\Delta A\Delta V.
		\label{4.1}
	\end{equation}
	Since $\Delta T$ is an upper triangular matrix, applying $\rm triu$ to (\ref{4.1}) implies 
	\begin{align*}
	    \Delta T={\rm triu}(U^HA\Delta V+\Delta U^HAV+\Delta U^HA\Delta V+U^H\Delta AV+U^H\Delta A\Delta V+\Delta U^H\Delta AV+\Delta U^H\Delta A\Delta V).
	\end{align*}
	Combining the aforementioned equation with (\ref{triu}), we have 
	\begin{equation}
		\begin{split}
				\vert \Delta T\vert \preceq {\rm triu}&(\vert U^H\vert\vert A\vert \vert\Delta V\vert+
				\vert \Delta U^H\vert \vert A\vert \vert V\vert+\vert \Delta U^H\vert \vert A\vert \vert \Delta V\vert +\vert U^H\vert \vert \Delta A\vert \vert V\vert \\
				&+\vert U^H\vert \vert \Delta A\vert \vert \Delta V\vert +\vert \Delta U^H\vert \vert \Delta A\vert \vert V\vert +\vert \Delta U^H\vert \vert \Delta A\vert \vert \Delta V\vert ).
		\end{split}	
		\label{4.2}
	\end{equation}
	Considering (\ref{eq:sec3.7}) and (\ref{4.2}), the perturbation bound of $T$, i.e., the bound (\ref{4.3}), holds.  
	Similarly, we can obtain the perturbation bound of $R$, i.e., the bound (\ref{4.4}). 
\end{proof}


\section{Linear perturbation bounds for diagonal elements of $T$ and $R$}
\label{diagonal}
The method is similar to the one of obtaining the linear bounds of $x$ and $y$. That is, by using the componentwise representations of (\ref{eq:sec1.1}) and (\ref{eq:sec2.1}), we can construct matrix-vector equations and then find the desired bounds.
Specifically, noting that the matrices $T,R,\widetilde T$ and $\widetilde R$ are upper triangular matrices, we have
\begin{equation}
	\begin{cases}
		\Delta t_{ij}=\widetilde t_{ij}-t_{ij}=u_{i}^HA\Delta v_{j}+\Delta u_{i}^HAv_{j}+\Delta u_{i}^HA\Delta v_{j}+\widetilde u_{i}^H\Delta A\widetilde v_{j} ,\\
		\Delta r_{ij}=\widetilde r_{ij}-r_{ij}=u_{i}^HB\Delta v_{j}+\Delta u_{i}^HBv_{j}+\Delta u_{i}^HB\Delta v_{j}+\widetilde u_{i}^H\Delta B\widetilde v_{j} ,
	\end{cases}
	\label{eq:sec3.8}
\end{equation}
where $1\leq i\leq j\leq n$. 
Hence, when $i=j$, (\ref{eq:sec3.8}) can be written as 
\begin{equation}
	\begin{cases}
		\Delta t_{ii}=\sum_{k=i}^{n}t_{ik}v_{k}^H\Delta v_{i}-\sum_{k=1}^{i}t_{ki}u_{i}^H\Delta u_{k}-\sum_{k=1}^{i}t_{ki}\Delta u_{i}^H\Delta u_{k}+\Delta u_{i}^HA\Delta v_{i}+\widetilde u_{i}^H\Delta A \widetilde v_{i}, \\
		\Delta r_{ii}=\sum_{k=i}^{n}r_{ik}v_{k}^H\Delta v_{i}-\sum_{k=1}^{i}r_{ki}u_{i}^H\Delta u_{k}-\sum_{k=1}^{i}r_{ki}\Delta u_{i}^H\Delta u_{k}+\Delta u_{i}^HB\Delta v_{i}+\widetilde u_{i}^H\Delta B \widetilde v_{i} ,
	\end{cases}
	\label{eq:sec3.9}
\end{equation}
where $i=1,2,\dots,n$.
Next, we write it as a matrix-vector equation.  Some notations are necessary.
\begin{equation}
	f_{1}=\begin{bmatrix}
		\widetilde u_{1}^H\Delta A\widetilde v_{1}\\
		\widetilde u_{2}^H\Delta A\widetilde v_{2}\\
		\vdots \\
		\widetilde u_{n}^H\Delta A\widetilde v_{n}
	\end{bmatrix},
	g_{1}=\begin{bmatrix}
		\widetilde u_{1}^H\Delta B\widetilde v_{1}\\
		\widetilde u_{2}^H\Delta B\widetilde v_{2}\\
		\vdots \\
		\widetilde u_{n}^H\Delta B\widetilde v_{n}
	\end{bmatrix},\Delta \lambda^t=\begin{bmatrix}
		\Delta t_{11} \\ \Delta t_{22} \\ \vdots \\ \Delta t_{nn}
	\end{bmatrix},
	\Delta \lambda^r=\begin{bmatrix}
		\Delta r_{11} \\ \Delta r_{22} \\ \vdots \\ \Delta r_{nn}
	\end{bmatrix},	
\end{equation}
\begin{equation}
	d_t=\begin{bmatrix}
		t_{11}(v_1^H\Delta v_1-u_1^H\Delta u_1) \\
		t_{22}(v_2^H\Delta v_2-u_2^H\Delta u_2) \\
		\vdots \\
		t_{nn}(v_n^H\Delta v_n-u_n^H\Delta u_n) \\
	\end{bmatrix},
	d_r=\begin{bmatrix}
		r_{11}(v_1^H\Delta v_1-u_1^H\Delta u_1) \\
		r_{22}(v_2^H\Delta v_2-u_2^H\Delta u_2) \\
		\vdots \\
		r_{nn}(v_n^H\Delta v_n-u_n^H\Delta u_n) \\
	\end{bmatrix},\Delta^{dt}=\begin{bmatrix}
		\Delta^{dt}_{1} \\ \Delta^{dt}_{2} \\ \vdots
		\\ \Delta^{dt}_{n}
	\end{bmatrix},
	\Delta^{dr}=\begin{bmatrix}
		\Delta^{dr}_{1} \\ \Delta^{dr}_{2} \\ \vdots
		\\ \Delta^{dr}_{n}
	\end{bmatrix},
	\label{second order diagonal}
\end{equation}	
where
\begin{align*}
    \Delta^{dt}_{i}&=-\sum_{k=1}^{i}t_{ki}\Delta u_{i}^H\Delta u_{k}+\Delta u_{i}^HA\Delta v_{i}, \\
    \Delta^{dr}_{i}&=-\sum_{k=1}^{i}r_{ki}\Delta u_{i}^H\Delta u_{k}+\Delta u_{i}^HB\Delta v_{i}.
\end{align*}
Clearly, the vectors $\Delta \lambda^t$ and $\Delta \lambda^r$ consist of diagonal elements of the matrices $\Delta T$ and $\Delta R$, respectively, and $\Delta^{dt}$,  $\Delta^{dr}$, $d_t$ and $d_r$ contain the second order terms. Thus, considering (\ref{eq:sec3.9}), we get  
\begin{equation}
	\begin{bmatrix}
		\Delta \lambda^t \\ \Delta \lambda^r
	\end{bmatrix}=
	\begin{bmatrix}
		A_{1} & A_{2} \\ B_{1} & B_{2}
	\end{bmatrix}
	\begin{bmatrix}
		x \\ y
	\end{bmatrix}+
	\begin{bmatrix}
		f_{1} \\ g_{1}
	\end{bmatrix}+
	\begin{bmatrix}
		\Delta^{dt} \\ \Delta^{dr}
	\end{bmatrix}+\begin{bmatrix}
		d_t \\ d_r
	\end{bmatrix},
	\label{eq:sec3.10}
\end{equation}
where
\begin{align*}
    A_{1}&=\begin{bmatrix}
	0 & 0 & 0 & \cdots & 0 & 0 & 0 & 0 & \cdots & 0 & 0 & 0 & 0 & \cdots & 0 & \cdots & 0  \\
	-t_{12} & 0 & 0 & \cdots & 0 & 0 & 0 & 0 & \cdots & 0 & 0 & 0 & 0 & \cdots & 0 & \cdots & 0 \\
	0 & -t_{13} & 0 & \cdots & 0 & -t_{23} & 0 & 0 & \cdots & 0 & 0 & 0 & 0 & \cdots & 0 & \cdots & 0  \\
	\vdots & \vdots & \ddots & \cdots & \vdots &
	\vdots & \ddots & \vdots & \cdots &
	\vdots & \vdots & \vdots & \vdots & \cdots &
	\vdots & \cdots & \vdots \\
	0 & 0 & 0 & \cdots & -t_{1n} & 0 & 0 & 0 & \cdots & -t_{2n} & 0 & 0 & 0 & \cdots & -t_{3n} & \cdots & -t_{n-1,n} 
\end{bmatrix}, \\
A_{2}&=\begin{bmatrix}
	t_{12} & t_{13} & t_{14} & \cdots & t_{1n} &
	0 & 0 & 0 & \cdots & 0 & 0 & 0 & 0 & \cdots & 0 & \cdots & 0 \\
	0 & 0 & 0 & \cdots & 0 & t_{23} & t_{24} & t_{25} & \cdots & t_{2n} & 0 & 0 & 0 & \cdots & 0 & \cdots & 0 \\
	0 & 0 & 0 & \cdots & 0 & 0 & 0 & 0 & \cdots & 0 & t_{34} & t_{35} & t_{36} & \cdots & t_{3n} & \cdots & 0 \\
	\vdots & \vdots & \vdots & \cdots & \vdots & \vdots & \vdots & \vdots & \cdots & \vdots & \vdots & \vdots & \vdots & \cdots & \vdots & \cdots & \vdots \\
	0 & 0 & 0 & \cdots & 0 & 0 & 0 & 0 & \cdots & 0 & 0 & 0 & 0 & \cdots & 0 & \cdots & 0 
\end{bmatrix},
\end{align*}
and the matrices $B_{1}$ and $B_{2}$ have the same forms as the matrices $A_{1}$ and $A_{2}$ except that the element $t_{ij}$ is replaced by $r_{ij}$.
Setting
\begin{equation}
    E=\begin{bmatrix}
	A_{1} & A_{2} \\ B_{1} & B_{2}
\end{bmatrix},
Z=\begin{bmatrix}
	EL^{-1} , I_{2n}
\end{bmatrix},
\label{Z}
\end{equation}
neglecting the second order term in (\ref{eq:sec3.10}), and using (\ref{LNI1}), we have
\begin{align*}
    \begin{bmatrix}
	\Delta \lambda^t \\ \Delta \lambda^r
\end{bmatrix}=EL^{-1}\begin{bmatrix}
	f \\ g
\end{bmatrix}+\begin{bmatrix}
	f_{1} \\ g_{1}
\end{bmatrix}
=\begin{bmatrix}
	EL^{-1} , I_{2n}
\end{bmatrix}\begin{bmatrix}
	f \\ g \\
	f_{1} \\  g_{1}
\end{bmatrix}.
\end{align*}
Since 
\begin{align*}
    \left\Vert \begin{bmatrix}
	f \\ g \\
	f_{1} \\  g_{1}
\end{bmatrix} \right\Vert_{2} \leq \left\Vert \begin{bmatrix}
	\Delta A \\ \Delta B
\end{bmatrix} \right\Vert_{F},
\end{align*}
the linear componentwise perturbation bounds of $\Delta \lambda^t$ and $\Delta \lambda^r$ are obtained as follows
\begin{equation}
	\left\vert \begin{bmatrix}
		\Delta \lambda^t \\ \Delta \lambda^r
	\end{bmatrix}_{i}\right\vert \leq \Vert Z_{i,:}\Vert_{2}\left\Vert \begin{bmatrix}
		\Delta A \\ \Delta B
	\end{bmatrix} \right\Vert_{F},
	\label{equ2}
\end{equation} 
where $i=1,2,\dots,2n$ and $\begin{bmatrix}
	\Delta \lambda^t \\ \Delta \lambda^r
\end{bmatrix}_{i}$ stands for the $i$th component of the vector $\begin{bmatrix}
	\Delta \lambda^t \\ \Delta \lambda^r
\end{bmatrix}$.

In summary, we have the following theorem.
\begin{theorem} \label{theorem5.1}
	Let $(A,B)$ and $(\widetilde{A},\widetilde{B})$ have the generalized Schur decompositions in (\ref{eq:sec1.1}) and (\ref{eq:sec2.1}), respectively. 
	Then the linear perturbation bounds for the diagonal elements of $T$ and $R$ given in (\ref{equ2})  hold.
\end{theorem}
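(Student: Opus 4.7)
The plan is to mimic the strategy used in Theorem \ref{theorem1}, but applied to the diagonal entries of $T$ and $R$ instead of to the off-diagonal entries of $\Delta W$ and $\Delta K$. First I would specialize (\ref{eq:sec3.8}) to $i=j$ to obtain an exact identity for each $\Delta t_{ii}$ and $\Delta r_{ii}$. Substituting the componentwise expansions (\ref{eq:sec2.3}) into $u_{i}^{H}A\Delta v_{i}$ and $\Delta u_{i}^{H}Av_{i}$ and invoking the unitarity identity (\ref{delta uiuj}) produces (\ref{eq:sec3.9}), which cleanly separates the terms linear in the inner products $u_{i}^{H}\Delta u_{k}$ and $v_{k}^{H}\Delta v_{i}$ (the entries of $\Delta W$ and $\Delta K$) from the genuinely quadratic products.

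Next I would split the right-hand side of (\ref{eq:sec3.9}) into three pieces: (i) a linear combination of the strictly off-diagonal entries of $\Delta W$ and $\Delta K$, which are precisely the components of $x$ and $y$ (or their conjugates, whose absolute values coincide); (ii) the diagonal contributions $t_{ii}(v_{i}^{H}\Delta v_{i}-u_{i}^{H}\Delta u_{i})$ collected into $d_{t}$, together with their analogues $d_{r}$, which are second order under the standing assumption that the diagonal entries of $\Delta W_{2}$ and $\Delta K_{2}$ are real, so that $u_{i}^{H}\Delta u_{i}=-\tfrac{1}{2}\Delta u_{i}^{H}\Delta u_{i}$; and (iii) the remaining quadratic remainders $\Delta^{dt}$ and $\Delta^{dr}$. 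Packaging the linear coefficients into the block matrix $E=\begin{bmatrix}A_{1}&A_{2}\\B_{1}&B_{2}\end{bmatrix}$ then reproduces the identity (\ref{eq:sec3.10}).

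After discarding the second-order vectors $d_{t}, d_{r}, \Delta^{dt}, \Delta^{dr}$, I would invoke the linear approximation (\ref{LNI1}) supplied by Theorem \ref{theorem1} to replace the stacked vector $[x^{T},y^{T}]^{T}$ by $L^{-1}[f^{T},g^{T}]^{T}$. Combining this with the first-order contribution $[f_{1}^{T},g_{1}^{T}]^{T}$ gives $\begin{bmatrix}\Delta\lambda^{t}\\\Delta\lambda^{r}\end{bmatrix}_{\mathrm{lin}}=Z\,[f^{T},g^{T},f_{1}^{T},g_{1}^{T}]^{T}$ with $Z$ as in (\ref{Z}). Since every component of $f,g,f_{1},g_{1}$ is an inner product of unit columns of $\widetilde{U}$ and $\widetilde{V}$ against $\Delta A$ or $\Delta B$, summing the squared magnitudes and using unitary invariance yields $\Vert[f^{T},g^{T},f_{1}^{T},g_{1}^{T}]^{T}\Vert_{2}\le\Vert[\Delta A^{T},\Delta B^{T}]^{T}\Vert_{F}$, as already noted in the discussion preceding (\ref{equ2}). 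A Cauchy--Schwarz bound applied to the $i$th row of $Z$ then delivers (\ref{equ2}).

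The main obstacle I anticipate is purely bookkeeping: the matrices $A_{1}, A_{2}, B_{1}, B_{2}$ encode a nontrivial reindexing from the pairs $(i,k)$ with $k<i$ (which contribute entries of $\Delta W^{H}$) and $(k,i)$ with $k>i$ (which contribute entries of $\Delta K$) into the $\mathrm{slvec}$ ordering that defines $x$ and $y$. Keeping track of signs and conjugations, and verifying that each $-t_{ki}$ coefficient arising from $-\sum_{k=1}^{i-1}t_{ki}u_{i}^{H}\Delta u_{k}$ lands in the correct $\mathrm{slvec}$ slot of the $i$th row of $A_{1}$, is what makes the construction of $E$ tedious rather than conceptually hard. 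Once this indexing is settled, the remainder of the proof is a direct transcription of the linearization argument that produced (\ref{eq:sec2.10}).
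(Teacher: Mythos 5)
Your proposal matches the paper's own proof essentially step for step: it specializes (\ref{eq:sec3.8}) to $i=j$ to obtain (\ref{eq:sec3.9}), isolates the second-order vectors $d_{t}$, $d_{r}$, $\Delta^{dt}$, $\Delta^{dr}$, forms the linear identity (\ref{eq:sec3.10}) with the coefficient matrix $E$, substitutes the linearization (\ref{LNI1}), and applies Cauchy--Schwarz to the rows of $Z=\begin{bmatrix}EL^{-1}, I_{2n}\end{bmatrix}$ after bounding $\left\Vert\begin{bmatrix}f^{T},g^{T},f_{1}^{T},g_{1}^{T}\end{bmatrix}^{T}\right\Vert_{2}$ by $\left\Vert\begin{bmatrix}\Delta A^{T},\Delta B^{T}\end{bmatrix}^{T}\right\Vert_{F}$, exactly as the paper does. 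The only cosmetic remark is that the inner products $u_{i}^{H}\Delta u_{k}$ ($k<i$) and $v_{k}^{H}\Delta v_{i}$ ($k>i$) appearing in (\ref{eq:sec3.9}) are already strictly lower-triangular entries of $\Delta W$ and $\Delta K$, hence components of $x$ and $y$ themselves, so the conjugation you hedge against never actually arises.
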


Using the linear bounds of $\Delta \lambda^t$ and $\Delta \lambda^r$, we can consider the perturbation analysis of the generalized eigenvalue $\lambda$ of the regular matrix pair $(A,B)$, that is, $A\xi=\lambda B\xi$, where $\xi$ is a right eigenvector corresponding to the generalized eigenvalue $\lambda$. Recall that the eigenvalue $\lambda$ can also be represented as a pair of numbers $\langle \alpha,\beta \rangle$ such that $\beta A\xi=\alpha B\xi$. If $\beta\neq0$, $\lambda=\alpha/\beta$, otherwise $\lambda=\infty$. As mentioned in Section \ref{sec:int and no}, $\alpha$ and $\beta$ can be the diagonal elements of the matrices $T$ and $R$, respectively. 

Stewart and  Sun \cite{stewart1990matrix} defined the following chordal metric to measure the eigenvalue perturbation of a regular matrix pair,
\begin{equation}
	\mathcal{X}(\langle \alpha,\beta \rangle,\langle \widetilde{\alpha},\widetilde{\beta}\rangle)=\frac{\vert \alpha \widetilde{\beta}-\beta\widetilde{\alpha}\vert}{\sqrt{\vert \alpha\vert^2+\vert \beta\vert^2}\sqrt{\vert \widetilde{\alpha}\vert^2+\vert\widetilde{\beta}\vert^2}},
	\label{define}
\end{equation} 
where $\langle \widetilde{\alpha},\widetilde{\beta}\rangle$ is the perturbed eigenvalue,
and presented an upper bound 
\begin{equation}
	\mathcal{X}(\langle \alpha,\beta\rangle,\langle \widetilde{\alpha},\widetilde{\beta}\rangle)\leq
	\frac{\Vert \xi\Vert_{2}\Vert \eta\Vert_{2}}{\sqrt{\vert \alpha\vert^2+\vert \beta\vert^2}}\left\Vert\begin{bmatrix}
		\Delta A,\Delta B
	\end{bmatrix}\right\Vert_{2},
	\label{sun}
\end{equation}
where $\eta$ is the left eigenvector corresponding to the eigenvalue $\langle \alpha,\beta \rangle$. Note that the chordal metric can also be used for the generalized singular value (e.g.,\cite{sunmatrixperturbationanalysis,xu2018explicit}). 

On the basis of (\ref{define}), we have 
\begin{equation*}
	\mathcal{X}(\langle t_{ii},r_{ii} \rangle,\langle \widetilde{t}_{ii},\widetilde{r}_{ii}\rangle)=\frac{\vert t_{ii} \widetilde{r}_{ii}-r_{ii}\widetilde{t}_{ii}\vert}{\sqrt{\vert t_{ii}\vert^2+\vert r_{ii}\vert^2}\sqrt{\vert \widetilde{t}_{ii}\vert^2+\vert\widetilde{r}_{ii}\vert^2}}, \ i=1,2,\dots,n,
\end{equation*} 
which combined with Theorem \ref{theorem5.1}, i.e., 
\begin{align*}
			\vert \widetilde{t}_{ii}-t_{ii} \vert  \leq \Vert Z_{i,:}\Vert_2\left\Vert\begin{bmatrix}
				\Delta A\\ \Delta B\\
			\end{bmatrix}\right\Vert_{F}, \vert \widetilde{r}_{ii}-r_{ii} \vert  \leq\Vert Z_{i+n,:}\Vert_2\left\Vert\begin{bmatrix}
			\Delta A\\ \Delta B\\
		\end{bmatrix}\right\Vert_{F},
		\end{align*}
implies a new bound for generalized eigenvalue with the chordal metric:
	\begin{equation}
		\mathcal{X}(\langle t_{ii},r_{ii}\rangle,\langle  \widetilde{t}_{ii},\widetilde{r}_{ii}\rangle) \leq
		\frac{\left\vert t_{ii}\right\vert r_{i}+ \left\vert r_{ii}\right\vert t_{i}}
		{\sqrt{\left\vert t_{ii}\right\vert^2+\left\vert r_{ii}\right\vert^2}\sqrt{\left\vert t_{ii}\right\vert^2+\left\vert r_{ii}\right\vert^2-2(\vert t_{ii}\vert t_{i}+\vert r_{ii}\vert r_{i})}},
		\label{our}
	\end{equation}
	where 
		\[t_{i}=\Vert Z_{i,:}\Vert_2\left\Vert\begin{bmatrix}
		\Delta A\\ \Delta B\\
	\end{bmatrix}\right\Vert_{F}, r_{i}=\Vert Z_{i+n,:}\Vert_2\left\Vert\begin{bmatrix}
		\Delta A\\ \Delta B\\
	\end{bmatrix}\right\Vert_{F}.\]
This is because 
	\begin{align*}
	    \vert t_{ii}\widetilde{r}_{ii}-r_{ii}\widetilde{t}_{ii}\vert &= \vert t_{ii}(\widetilde{r}_{ii}-r_{ii})-r_{ii}(\widetilde{t}_{ii}-t_{ii}) \vert 
	     \leq \vert t_{ii}(\widetilde{r}_{ii}-r_{ii}) \vert + \vert r_{ii}(\widetilde{t}_{ii}-t_{ii}) \vert \\
	    & \leq \vert t_{ii} \vert \vert\widetilde{r}_{ii}-r_{ii} \vert + \vert r_{ii}\vert \vert \widetilde{t}_{ii}-t_{ii} \vert 
	     \leq \vert t_{ii} \vert r_{i} + \vert r_{ii}\vert t_{i},
	\end{align*}
and
	 \begin{align*}
	     \vert \widetilde{t}_{ii} \vert^2 + \vert \widetilde{r}_{ii} \vert^2 &= \vert \widetilde{t}_{ii}-t_{ii}+t_{ii}\vert^2 + \vert \widetilde{r}_{ii}-r_{ii}+r_{ii}\vert^2 \\
	     & \geq (\vert \widetilde{t}_{ii}-t_{ii}\vert-\vert t_{ii}\vert)^2+ (\vert \widetilde{r}_{ii}-r_{ii}\vert-\vert r_{ii}\vert)^2 \\
	     &= \vert \widetilde{t}_{ii}-t_{ii}\vert^2 + \vert \widetilde{r}_{ii}-r_{ii}\vert^2 +\vert t_{ii}\vert^2+\vert r_{ii}\vert^2 - 2(\vert \widetilde{t}_{ii}-t_{ii}\vert\vert t_{ii}\vert + \vert \widetilde{r}_{ii}-r_{ii}\vert\vert r_{ii}\vert) \\
	     & \geq \vert t_{ii}\vert^2 + \vert r_{ii}\vert^2 - 2(\vert \widetilde{t}_{ii}-t_{ii}\vert\vert t_{ii}\vert + \vert \widetilde{r}_{ii}-r_{ii}\vert\vert r_{ii}\vert) \\
	     & \geq \vert t_{ii}\vert^2 + \vert r_{ii}\vert^2 - 2(\vert t_{ii}\vert t_{i}+\vert r_{ii}\vert r_{i}) > 0,
	 \end{align*} 
	 where the penultimate inequality relies on $\vert \widetilde{t}_{ii}-t_{ii}\vert \leq t_{i}$ and $\vert \widetilde{r}_{ii}-r_{ii}\vert \leq r_{i}$, and the last inequality comes from the assumption that $t_{i}\ll \vert t_{ii}\vert$ and $r_{i}\ll \vert r_{ii}\vert$. 


\begin{remark}
	Compared with the bound (\ref{sun}), our bound (\ref{our}) doesn't contain the information on the left and right generalized eigenvectors. A numerical example in Section \ref{sec:num} shows that our bound can be more accurate than (\ref{sun}) in most cases. 
\end{remark}

Now we consider the condition number of the generalized eigenvalue $\lambda$ (e.g.,\cite{fraysse1998note,higham1998structured,diao2019structured}). A definition similar to the one in \cite{higham1998structured} is first given as follows
\begin{equation}
	{\rm cond}(\lambda):=\lim_{\epsilon\to\ 0}\sup \Big \{ \frac{\vert \Delta \lambda \vert}{\epsilon\vert \lambda\vert}:(A+\Delta A)(\xi+\Delta \xi)=(\lambda +\Delta\lambda)(B+\Delta B)(\xi+\Delta \xi), \left\Vert\begin{bmatrix}
		\Delta A,\Delta B
	\end{bmatrix}\right\Vert_{F}\leq \epsilon \Big \}.
	\label{condition define}
\end{equation}
And, as done in \cite{higham1998structured}, we assume that $\Delta \xi \to 0$ as $\epsilon \to 0$ in order to prevent $\rm cond(\lambda)=\infty$. In the following, we will obtain an explicit expression of ${\rm cond}(\lambda)$. First, note that a fact
$\langle \alpha,\beta \rangle=\langle \eta^HA\xi,\eta^HB\xi \rangle$, which was proved by Stewart and Sun \cite[p.277]{stewart1990matrix}. Thus, we have
\begin{equation}
	\begin{split}
		\tilde{\alpha}=\tilde{\eta}^H\tilde{A}\tilde{\xi} &=(\eta^H+\Delta \eta^H)(A+\Delta A)(\xi+\Delta \xi) \\
		&=\alpha+\eta^H\Delta A\xi+O(\epsilon^2) ,
	\end{split}
	\label{a}
\end{equation} 
where $\tilde{\eta}$ and $\tilde{\xi}$ are the left and right generalized eigenvectors corresponding to the eigenvalue $\langle \widetilde{\alpha},\widetilde{\beta}\rangle$ of the matrix pair $(\tilde{A},\tilde{B})$, respectively.
Similarly, we obtain 
\begin{equation}
	\tilde{\beta}=\beta+\eta^H\Delta B\xi+O(\epsilon^2).
	\label{b}
\end{equation}
Thus, combining the equation $(A+\Delta A)(\xi+\Delta \xi)=(\lambda +\Delta\lambda)(B+\Delta B)(\xi+\Delta \xi)$ with (\ref{a}) and (\ref{b}), implies
\[\begin{split}
	\Delta \lambda &=\frac{\eta^H\Delta A\xi-\lambda \eta^H\Delta B\xi}{\eta^HB\xi}+O(\epsilon^2) \\
	&=\frac{\tilde{\alpha}-\alpha-\lambda (\tilde{\beta}-\beta)}{\beta}+O(\epsilon^2) \\
	&=\frac{\Delta \alpha}{\beta}-\frac{\alpha\Delta \beta}{\beta^2}+O(\epsilon^2),
\end{split}\] 
where $\Delta \alpha=\tilde{\alpha}-\alpha$ and $\Delta \beta=\tilde{\beta}-\beta$ are the  perturbations of $\alpha$ and $\beta$, respectively. 
Alternatively, we can get $\Delta \lambda$ more easily by Taylor expansion, that is,
\begin{align*}
	\Delta \lambda&=\lambda+\Delta \lambda-\lambda=\frac{\alpha+\Delta \alpha}{\beta+\Delta \beta}-\frac{\alpha}{\beta}
	=\frac{\alpha+\Delta \alpha}{\beta}(1-\frac{\Delta \beta}{\beta})-\frac{\alpha}{\beta}+O(\epsilon^2) \\
	&=\frac{\Delta \alpha}{\beta}-\frac{\alpha\Delta \beta}{\beta^2}+O(\epsilon^2).
\end{align*}

Therefore, from (\ref{condition define}), we have 
\begin{equation}
	\frac{\vert \Delta \lambda \vert}{\vert  \lambda \vert}\leq \left\vert \frac{\Delta \alpha}{\alpha}\right\vert+\left\vert \frac{\Delta \beta}{\beta}\right\vert \leq \left(\left\vert \frac{\Delta \alpha}{\alpha}\right\vert+\left\vert \frac{\Delta \beta}{\beta}\right\vert\right)\frac{\epsilon}{\left\Vert\begin{bmatrix}
			\Delta A,\Delta B
		\end{bmatrix}\right\Vert_{F}}.
	\label{cond1}
\end{equation}
Dividing (\ref{cond1}) by $\epsilon$ and considering (\ref{equ2}), the condition number of the $i$th eigenvalue $\lambda_i$ can be obtained: 
\begin{align}
	\frac{\vert \Delta \lambda_i \vert}{\epsilon\vert  \lambda_i \vert}&\leq\left(\left \vert \frac{\Delta \alpha}{\alpha}\right\vert+\left\vert \frac{\Delta \beta}{\beta}\right\vert\right)\frac{1}{\left\Vert\begin{bmatrix}
			\Delta A,\Delta B
		\end{bmatrix}\right\Vert_{F}}\nonumber\\
		&\leq \frac{\Vert Z_{i,:}\Vert_{2}}{\vert t_{ii}\vert}+\frac{\Vert Z_{i+n,:}\Vert_{2}}{\vert r_{ii}\vert},
			\label{cond2}
\end{align}
where $i=1,2,\dots,n$ and $Z$ is given in (\ref{Z}). 

From the above discussions, we conclude the following theorem.
\begin{theorem}
	The condition number of the $i$th generalized eigenvalue $\lambda_i$ of $(A,B)$ can be given in (\ref{cond2}).
\end{theorem}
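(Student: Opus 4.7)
The plan is to combine the Stewart--Sun representation $\langle\alpha_i,\beta_i\rangle = \langle\eta^H A\xi,\eta^H B\xi\rangle$ of the generalized eigenvalue $\lambda_i$, taken with $\alpha_i = t_{ii}$ and $\beta_i = r_{ii}$ on the diagonals of $T$ and $R$, with the componentwise bound of Theorem \ref{theorem5.1} for the diagonal perturbations $\Delta t_{ii}$ and $\Delta r_{ii}$. The starting point is the definition (\ref{condition define}): I will need only a first-order control of $|\Delta\lambda_i|$ in $\epsilon$, while keeping the dependence on $\Vert[\Delta A,\Delta B]\Vert_F$ explicit so as to pass to the limit $\epsilon\to 0$ at the end.

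First, I will expand $\tilde\alpha_i = \tilde\eta^H\tilde A\tilde\xi$ and $\tilde\beta_i = \tilde\eta^H\tilde B\tilde\xi$ in the perturbation. Because (\ref{condition define}) already enforces $\Delta\xi\to 0$ as $\epsilon\to 0$ (and hence $\Delta\eta\to 0$ from the left eigenproblem), every product containing at least two of the symbols $\Delta A,\Delta B,\Delta\xi,\Delta\eta$ is absorbed into $O(\epsilon^2)$, producing $\Delta\alpha_i = \eta^H\Delta A\,\xi + O(\epsilon^2)$ and $\Delta\beta_i = \eta^H\Delta B\,\xi + O(\epsilon^2)$, as already recorded in (\ref{a})--(\ref{b}).

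Next, I will Taylor expand $\lambda_i+\Delta\lambda_i = (\alpha_i+\Delta\alpha_i)/(\beta_i+\Delta\beta_i)$ to first order, obtaining $\Delta\lambda_i = \Delta\alpha_i/\beta_i - \alpha_i\Delta\beta_i/\beta_i^2 + O(\epsilon^2)$ and hence
\[
\frac{|\Delta\lambda_i|}{|\lambda_i|} \le \left|\frac{\Delta\alpha_i}{\alpha_i}\right| + \left|\frac{\Delta\beta_i}{\beta_i}\right| + O(\epsilon).
\]
Identifying $\Delta\alpha_i = \Delta t_{ii}$ and $\Delta\beta_i = \Delta r_{ii}$, I will substitute the bounds of Theorem \ref{theorem5.1}, namely $|\Delta t_{ii}|\le \Vert Z_{i,:}\Vert_2\,\Vert[\Delta A,\Delta B]\Vert_F$ and $|\Delta r_{ii}|\le \Vert Z_{i+n,:}\Vert_2\,\Vert[\Delta A,\Delta B]\Vert_F$, then divide by $\epsilon$, take the supremum over perturbations with $\Vert[\Delta A,\Delta B]\Vert_F\le\epsilon$, and let $\epsilon\to 0$ to reach exactly (\ref{cond2}).

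The main technical subtlety is the consistent first-order accounting: the $O(\epsilon^2)$ remainders in the expansions of $\tilde\alpha_i$ and $\tilde\beta_i$ must genuinely vanish in the condition-number limit, which rests on the eigenvector normalization together with the standing hypothesis $\Delta\xi\to 0$ built into (\ref{condition define}). A minor point to watch is the implicit requirement $r_{ii}\neq 0$, i.e., $\lambda_i$ finite; this is guaranteed since $(A,B)$ is regular and we are treating one of its finite generalized eigenvalues, but for an infinite eigenvalue one would have to switch to the chordal formulation (\ref{our}) instead.
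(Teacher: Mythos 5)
Your proposal is correct and follows essentially the same route as the paper: the Stewart--Sun representation $\langle\alpha,\beta\rangle=\langle\eta^H A\xi,\eta^H B\xi\rangle$, the first-order expansions (\ref{a})--(\ref{b}), the Taylor expansion giving $\Delta\lambda=\Delta\alpha/\beta-\alpha\Delta\beta/\beta^2+O(\epsilon^2)$, the triangle inequality $\vert\Delta\lambda\vert/\vert\lambda\vert\le\vert\Delta\alpha/\alpha\vert+\vert\Delta\beta/\beta\vert$, and finally the substitution of the Theorem \ref{theorem5.1} bounds with $\Delta\alpha_i=\Delta t_{ii}$, $\Delta\beta_i=\Delta r_{ii}$ before dividing by $\epsilon$. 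Your added remarks on the vanishing of the $O(\epsilon^2)$ terms and the requirement $r_{ii}\neq 0$ are consistent with the paper's standing assumptions and do not change the argument.
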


\begin{remark}
Although the derivation of the condition number (\ref{cond2}) is done using the left and right generalized eigenvectors, the condition number itself doesn't include the information on 
generalized eigenvectors. In addition, it should be pointed out that
 (\ref{cond2}) is actual an upper bound for the true condition number. 
\end{remark}

\begin{remark}
	According to (\ref{sun}), Frayss{\'e} and Toumazou \cite{fraysse1998note} derived a upper bound of  $\Delta \lambda$
	\begin{align*}
	    \vert \Delta \lambda\vert\leq \sqrt{1+\vert \lambda\vert^2}\frac{\Vert \xi\Vert_{2}\Vert \eta\Vert_{2}}{\vert \eta^HB\xi\vert}\left\Vert\begin{bmatrix}
		\Delta A,\Delta B
	\end{bmatrix}\right\Vert_{2}.
	\end{align*}
	Thus, by setting $\Vert[\Delta A,\Delta B]\Vert_{2}\leq \epsilon$, in our notation, we can find a condition number from the results in \cite{stewart1990matrix,fraysse1998note}
	\begin{equation}
		\frac{\vert \Delta \lambda \vert}{\epsilon\vert  \lambda \vert}\leq \frac{\sqrt{1+\vert \lambda\vert^2}}{\vert \lambda \vert}\frac{\Vert \xi\Vert_{2}\Vert \eta\Vert_{2}}{\vert \eta^HB\xi\vert}.
		\label{sun1}
	\end{equation}
	Actually, \eqref{sun1} is also an upper bound for the true condition number. In addition, Higham \cite{higham1998structured} presented the following normwise condition number
	\begin{equation}
		\frac{\vert \Delta \lambda \vert}{\epsilon\vert  \lambda \vert}\leq {\rm normcond}(\lambda)=\frac{\Vert \eta\Vert_2\Vert \xi\Vert_2(\Vert E\Vert_2+\vert \lambda\vert \Vert F\Vert_2)}{\vert\lambda\vert\vert \eta^HB\xi\vert},
		\label{normcond}
	\end{equation}
	by setting $\Vert \Delta A\Vert_2 \leq \epsilon\Vert E \Vert_2,\Vert \Delta B\Vert_2 \leq \epsilon\Vert F\Vert_2$, and the following componentwise condition number
	\begin{equation}
		\frac{\vert \Delta \lambda \vert}{\epsilon\vert  \lambda \vert}\leq {\rm compcond}(\lambda)=\frac{\vert \eta^H\vert E \vert \xi\vert +\vert \lambda \vert \vert \eta^H\vert F \vert \xi \vert}{\vert \lambda \vert \vert \eta^HB\xi\vert},
		\label{compcond}
	\end{equation}
	by setting $\vert \Delta A \vert\leq \epsilon E,\vert \Delta B \vert\leq \epsilon F$. Note that the definitions of the above four condition numbers are different. Specifically, the definitions for the condition numbers (\ref{cond2}),  (\ref{sun1}), (\ref{normcond}), and (\ref{compcond}) are (\ref{condition define}),
		\begin{align*}
		    &\lim_{\epsilon\to\ 0}\sup \Big \{ \frac{\vert \Delta \lambda \vert}{\epsilon\vert \lambda\vert}:(A+\Delta A)(\xi+\Delta \xi)=(\lambda +\Delta\lambda)(B+\Delta B)(\xi+\Delta \xi), \left\Vert\begin{bmatrix}
		\Delta A,\Delta B
	\end{bmatrix}\right\Vert_{2}\leq \epsilon \Big \},\\
		    &\lim_{\epsilon\to\ 0}\sup \Big \{ \frac{\vert \Delta \lambda \vert}{\epsilon\vert \lambda\vert}:(A+\Delta A)(\xi+\Delta \xi)=(\lambda +\Delta\lambda)(B+\Delta B)(\xi+\Delta \xi), \left\Vert
		\Delta A\right\Vert_{2}\leq \epsilon \Vert E\Vert_2,\left\Vert\Delta B
	\right\Vert_{2}\leq \epsilon \Vert F\Vert_2 \Big \},
		\end{align*}
	and	
	\begin{align*}
		    \lim_{\epsilon\to\ 0}\sup \Big \{ \frac{\vert \Delta \lambda \vert}{\epsilon\vert \lambda\vert}:(A+\Delta A)(\xi+\Delta \xi)=(\lambda +\Delta\lambda)(B+\Delta B)(\xi+\Delta \xi), \left\vert
		\Delta A\right\vert\leq \epsilon E,\left\vert\Delta B
	\right\vert\leq \epsilon F \Big \},
		\end{align*} 
		respectively. So, it is difficult to compare the condition numbers (\ref{cond2}), (\ref{sun1}), (\ref{normcond}) and (\ref{compcond}) in theory,  and they are usually not equal either. In Section \ref{sec:num}, we will compare them numerically.
\end{remark}

\section{Linear perturbation bounds for generalized invariant subspace}	
\label{sec:invariant}
We first present the definition of the generalized invariant subspace.
\begin{myDef}[\cite{sunmatrixperturbationanalysis}]\label{definition}
	Let $(A,B)$ be a regular matrix pair. If there are unitary matrices $Y$ and $X=[X_1,X_2]$, where $X_1 \in \mathbb{C}^{n\times p}$, such that
	\begin{align*}
	    Y^HAX=\begin{bmatrix}
		A_{11} & A_{12} \\ 0 & A_{22}
	\end{bmatrix},Y^HBX=\begin{bmatrix}
		B_{11} & B_{12} \\ 0 & B_{22}
	\end{bmatrix},
	\end{align*}
	where $A_{11}, B_{11} \in \mathbb{C}^{p\times p}$, we call the column subspace of $X_1$, i.e., $R(X_1)$, the $p$-dimensional generalized invariant subspace of $(A,B)$.
\end{myDef}
Combining Definition \ref{definition} with the generalized Schur decomposition (\ref{eq:sec1.1}), we find that the perturbation of $p$-dimensional generalized invariant subspace is the perturbation of the span of the first $p$ columns of the matrix $V$. 

We denote by $\mathscr{X}$ the $p$-dimensional unperturbed generalized invariant subspace of $(A,B)$, by $\widetilde{\mathscr{X}}$ the $p$-dimensional perturbed generalized invariant subspace of $(\widetilde{A},\widetilde{B})$, and by $V_{X}$ and $\widetilde{V}_{X}$ the orthonormal bases for $\mathscr{X}$ and $\widetilde{\mathscr{X}}$, respectively. 
From \cite{davis1970rotation}, the distance between $\mathscr{X}$ and $\widetilde{\mathscr{X}}$ is measured by
\begin{align*}
    {\rm{sin}}(\Theta_{\rm {max}}(\mathscr{X}, \widetilde{\mathscr{X}}))=\Vert {V_{X}^{\perp}}^H\widetilde{V}_{X}\Vert_{2},
\end{align*}
where $V_{X}^{\perp}$ is the unitary complement of $V_{X}$. Since 
\begin{align*}
    \widetilde{V}_{X}=V_X+\Delta V_X,  {V_{X}^{\perp}}^H V_{X}=0,
\end{align*}
we have 
\begin{equation}
	{\rm{sin}}(\Theta_{\rm {max}}(\mathscr{X}, \widetilde{\mathscr{X}}))=\Vert {V_{X}^{\perp}}^H\Delta V_X\Vert_{2}.
	\label{sin}
\end{equation}
According to (\ref{sin}), it is easy to find that the perturbation of generalized invariant subspace is related to the strictly lower triangular part of the matrix $\Delta K$ given in (\ref{eq:sec2.6}), that is, the elements $y_l=v_i^H\Delta v_j$ with $l=i+(j-1)n-\frac{j(j+1)}{2}$ and $1\leq j< i\leq n$. 
Hence,
\begin{equation}
	{\rm{sin}}(\Theta_{\rm {max}}(\mathscr{X}, \widetilde{\mathscr{X}}))=\Vert \Delta K_{p+1:n,1:p}\Vert_{2}.
\end{equation}
Then the maximum angle between the perturbed and unperturbed invariant subspace of dimension $p$ is 
\begin{equation}
	\Theta_{\rm {max}}(\mathscr{X}, \widetilde{\mathscr{X}})={\rm arcsin}(\Vert \Delta K_{p+1:n,1:p}\Vert_{2}).
	\label{6.3}
\end{equation}
Note that from (\ref{LNI}), the linear approximation of $y_l$ is
\begin{equation}
	y_{lin,l}=L_{\nu+l,:}^{-1}\begin{bmatrix}
		f\\g\\
	\end{bmatrix},l=1,2,\dots,\nu.
	\label{6.4}
\end{equation}
Thus, letting the elements $y_l$ of the matrix $\Delta K$ be replaced by the linear approximation, we have the linear approximation of $\Delta K$:
\begin{align*}
    \Delta K_{lin}=\begin{bmatrix}
	\star & \star & \star & \cdots & \star \\
	y_{lin,1} & \star & \star & \cdots & \star \\
	y_{lin,2} & y_{lin,n} & \star & \cdots & \star \\
	\vdots & \vdots & \vdots & \ddots &\vdots \\
	y_{lin,n-1} & y_{lin,2n-3} & y_{lin,3n-6} & \cdots & \star \\
\end{bmatrix}\in \mathbb{C}^{n\times n}.
\end{align*}
Here the stars $\star$ are numbers we're not interested in. Further, set
\begin{equation}
    \widetilde{L}=\begin{bmatrix}
	\star & \star & \star & \cdots & \star \\
	L_{\nu+1,:}^{-1} & \star & \star & \cdots & \star \\
	L_{\nu+2,:}^{-1} & L_{\nu+n,:}^{-1} & \star & \cdots & \star \\
	\vdots & \vdots & \vdots & \ddots &\vdots \\
	L_{\nu+n-1,:}^{-1} & L_{\nu+2n-3,:}^{-1} & L_{\nu+3n-6,:}^{-1} & \cdots & \star \\
\end{bmatrix}\in \mathbb{C}^{n\times 2n\nu},
\label{Lwan}
\end{equation}
\begin{align*}
    D_p={\rm diag}\left(\begin{bmatrix}
	f \\g 
\end{bmatrix},\begin{bmatrix}
	f \\g 
\end{bmatrix},\cdots,\begin{bmatrix}
	f \\g 
\end{bmatrix}\right)\in \mathbb{C}^{2p\nu \times p}.
\end{align*}
Thus, considering (\ref{6.4}), we have 
\begin{equation}
	\Delta K_{lin,(p+1:n,1:p)}=\widetilde{L}_{p+1:n,1:2p\nu}D_p.
	\label{linear delta k}
\end{equation}
Combining (\ref{6.3}), (\ref{linear delta k}), and the following result
\begin{align*}
    \Vert D_p\Vert_{2}=\left\Vert \begin{bmatrix}
	f \\ g
\end{bmatrix}\right\Vert_{2}\leq \left\Vert \begin{bmatrix}
	\Delta A \\ \Delta B
\end{bmatrix}\right\Vert_{F},
\end{align*}
we obtain the linear perturbation bound for the $p$-dimensional generalized invariant subspace:
\begin{equation}
	\Theta_{\rm {max}}(\mathscr{X}, \widetilde{\mathscr{X}})\leq {\rm arcsin}\left(\Vert \widetilde{L}_{p+1:n,1:2p\nu}\Vert_{2}\left\Vert \begin{bmatrix}
		\Delta A \\ \Delta B
	\end{bmatrix}\right\Vert_{F}\right).
	\label{perturbation of invariant}
\end{equation}

We summary the above discussions in the following theorem.

\begin{theorem} \label{theorem6.1}
	The linear perturbation bound  for the $p$-dimensional generalized invariant subspace of the  regular  matrix  pair $(A,B)$ given in (\ref{perturbation of invariant}) holds.
\end{theorem}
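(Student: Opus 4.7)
The plan is to assemble the bound directly from the ingredients already prepared in the excerpt, since the theorem is essentially a synthesis of equations (\ref{6.3}), (\ref{6.4}), (\ref{linear delta k}), and the linear bound machinery of Theorem \ref{theorem1}. First I would recall that a $p$-dimensional generalized invariant subspace corresponds to the span of the first $p$ columns of $V$, so the relevant perturbation data lives in the strictly lower $(n-p)\times p$ block $\Delta K_{p+1:n,\,1:p}$. The Davis--Kahan style identity in (\ref{sin}) together with the unitarity of $V$ gives ${\rm sin}(\Theta_{\rm max}(\mathscr{X},\widetilde{\mathscr{X}}))=\Vert\Delta K_{p+1:n,\,1:p}\Vert_2$, hence (\ref{6.3}).

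Next, I would replace each entry $y_l=v_i^H\Delta v_j$ of this block (with $1\le j<i\le n$ and $l=i+(j-1)n-\tfrac{j(j+1)}{2}$) by its linear approximation from (\ref{LNI}), namely $y_{lin,l}=L^{-1}_{\nu+l,:}\bigl[\begin{smallmatrix}f\\g\end{smallmatrix}\bigr]$. Packaging the relevant rows $L^{-1}_{\nu+l,:}$ into the matrix $\widetilde{L}$ defined in (\ref{Lwan}), and packaging $p$ copies of $\bigl[\begin{smallmatrix}f\\g\end{smallmatrix}\bigr]$ into the block-diagonal matrix $D_p$, yields the compact identity (\ref{linear delta k}): $\Delta K_{lin,(p+1:n,1:p)}=\widetilde{L}_{p+1:n,\,1:2p\nu}D_p$. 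Taking spectral norms and using submultiplicativity gives $\Vert\Delta K_{p+1:n,1:p}\Vert_2\le \Vert\widetilde{L}_{p+1:n,\,1:2p\nu}\Vert_2\Vert D_p\Vert_2$, up to second-order terms that are neglected in the linear analysis.

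To finish, I would bound $\Vert D_p\Vert_2$. Because $D_p$ is block diagonal with identical blocks $\bigl[\begin{smallmatrix}f\\g\end{smallmatrix}\bigr]$, its spectral norm equals $\Vert[f^T,g^T]^T\Vert_2$, and from the elementary estimates $\Vert f\Vert_2\le\Vert\Delta A\Vert_F$ and $\Vert g\Vert_2\le\Vert\Delta B\Vert_F$ established in Section \ref{sec:pre}, this is in turn bounded by $\Vert[\Delta A^T,\Delta B^T]^T\Vert_F$. Applying the monotone function $\arcsin$ to the resulting inequality (and observing that the argument is assumed to lie in $[0,1]$, which is the usual tacit hypothesis in first-order subspace perturbation analysis) yields exactly (\ref{perturbation of invariant}).

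The step I would expect to require the most care is the verification that passing from $\Delta K_{p+1:n,1:p}$ to $\Delta K_{lin,(p+1:n,1:p)}$ is legitimate at the linear level, i.e., that the discarded second-order contributions indeed do not affect the stated bound; this is the same linearization already used throughout the paper, so the justification reduces to consistency with (\ref{LNI1})--(\ref{LNI}) rather than to any new analytic content. Everything else is bookkeeping of block indices.
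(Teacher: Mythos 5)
Your proposal follows the paper's own derivation essentially verbatim: it passes from the Davis--Kahan identity (\ref{sin}) to (\ref{6.3}), substitutes the linear approximations $y_{lin,l}=L^{-1}_{\nu+l,:}\bigl[\begin{smallmatrix}f\\g\end{smallmatrix}\bigr]$ to obtain (\ref{linear delta k}) via $\widetilde{L}$ and $D_p$, and then bounds $\Vert D_p\Vert_2$ by $\bigl\Vert\begin{smallmatrix}\Delta A\\ \Delta B\end{smallmatrix}\bigr\Vert_F$ before applying $\arcsin$. The argument is correct and takes the same route as the paper.
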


\begin{remark}
    As done in \cite{petkov2021componentwise}, we can regard the term
\begin{equation}
	{\rm cond}(\Theta)=\Vert \widetilde{L}_{p+1:n,1:2p\nu}\Vert_{2}
	\label{cond of invariant1}
\end{equation}
as a condition number of the $p$-dimensional generalized invariant subspace. 
\end{remark}

\begin{remark}
	Let $T$ and $R$ in (\ref{eq:sec1.1}) be partitioned as
	\begin{align*}
	    T=\begin{bmatrix}
		T_{11} &T_{12} \\ 0 & T_{22} \\
	\end{bmatrix},R=\begin{bmatrix}
		R_{11} &R_{12} \\ 0 & R_{22} \\
	\end{bmatrix},
	\end{align*}
	where $T_{11}$ and $R_{11}\in \mathbb{C}^{p\times p}$. By defining a linear operator as
	\begin{equation}
	    \mathbf{T}(P,Q)=(QT_{11}-T_{22}P,QR_{11}-R_{22}P),
	    \label{linear operator}
	\end{equation}
	and writing its matrix as
	\begin{align*}
	    H=\begin{bmatrix}
		-I_p\otimes T_{22} & T_{11}\otimes I_{n-p}\\
		-I_p\otimes R_{22} & R_{11}\otimes I_{n-p}\\
	\end{bmatrix}\in \mathbb{C}^{2p(n-p)\times 2p(n-p)}, 
	\end{align*}
	Sun \cite{sunmatrixperturbationanalysis} obtained the following bound
	\begin{equation}
		\Vert {\rm{sin}}(\Theta(\mathscr{X}, \widetilde{\mathscr{X}}))\Vert_{F}\leq
		\Vert {\rm{tan}}(\Theta(\mathscr{X}, \widetilde{\mathscr{X}}))\Vert_{F}\leq
		s\left\Vert\begin{bmatrix}
			\Delta A \\ \Delta B
		\end{bmatrix}\right\Vert_F,
		\label{cond of invariant2}
	\end{equation}
	where $s=\Vert S_1\Vert_2$ with $S_1=H_{1:p(n-p),:}^{-1}\in \mathbb{C}^{p(n-p)\times 2p(n-p)}$,
	and regarded $s$ as the condition number for the  $p$-dimensional generalized invariant subspace of $(A,B)$. In addition, according to the linear operator (\ref{linear operator}), Stewart \cite{stewart1973error} defined the function dif as follows:
		\begin{align*}
			{\rm dif}(T_{11},R_{11};T_{22},R_{22})=\mathop{\rm inf}\limits_{\Vert (P,Q)\Vert_{F}=1}\Vert \mathbf{T}(P,Q) \Vert_{F},
		\end{align*}
	where $\Vert (P,Q)\Vert_{F}={\rm max}(\Vert P\Vert_{F},\Vert Q \Vert_{F})$. If the linear operator $\mathbf{T}$ is invertible, then 
	\begin{equation}
		\frac{1}{{\rm dif}(T_{11},R_{11};T_{22},R_{22})}=\Vert \mathbf{T}^{-1} \Vert_2. 
		\label{dif}
	\end{equation}
    From Sun \cite[p.395]{sunmatrixperturbationanalysis}, we know that $s \leq {\rm dif}^{-1}=\Vert \mathbf{T}^{-1} \Vert_2$.    
     In Section \ref{sec:num},
	we will compare (\ref{cond of invariant1}), $s$, and  ${\rm dif}^{-1}$ numerically.
\end{remark}


\section{Nonlinear perturbation bounds}
\label{sec:nonlinear}
In the previous discussions for componentwise perturbation bounds, all the second order terms are neglected. In this part, we try to use numerical method to calculate the perturbation bounds including the second order terms, i.e., the nonlinear perturbation bounds. The numerical method is an iterative method for deriving the solution $\widetilde{x}$ of the equation $T\widetilde{x}=g-\varphi (\widetilde{x})$ (e.g., \cite{stewart1973error}). Specifically, the solution $\widetilde{x}$ can be calculated by the following procedure
\begin{align*} 
	&1.\quad \widetilde{x}_0=0, \\
	&2. \quad {\rm for} \quad i=0,1,2,\cdots \\
	&1) \qquad r_i=g-\varphi (\widetilde{x}_i)-T\widetilde{x}_i ,\\
	&2) \qquad d_i=S^{-1}r_i ,\\
	&3) \qquad \widetilde{x}_{i+1}=\widetilde{x}_i+d_i,
\end{align*}
where $S$ is an approximation  to $T$.
More details can be found in \cite{stewart1973error}. Petkov \cite{petkov2021componentwise} ever used this approach to derive the nonlinear perturbation bounds of the Schur decomposition. In the following, we apply it to first calculate the nonlinear bounds of $x$ and $y$ in (\ref{eq:sec2.9}) and of $\Delta W$ and $\Delta K$ in (\ref{eq:sec2.6}) and then the nonlinear perturbation bounds of the generalized Schur decomposition.

Let $\Delta W=\begin{bmatrix}
	\Delta w_1,\Delta w_2,\cdots,\Delta w_n
\end{bmatrix}$ and $\Delta K=\begin{bmatrix}
	\Delta k_1,\Delta k_2,\cdots,\Delta k_n
\end{bmatrix}$. Then considering (\ref{eq:sec2.6}), we have 
\[\Delta w_j=U^H\Delta u_j, \Delta k_j=V^H\Delta v_j,\]
where $j=1,2,\cdots,n$, which together with the facts that $U$ and $V$ are unitary matrices yield
\begin{equation}
	\Delta u_i^H\Delta u_j=\Delta w_i^H\Delta w_j,
	\Delta v_i^H\Delta v_j=\Delta k_i^H\Delta k_j,
	\label{eq:uwvk}
\end{equation}
where $i,j=1,2,\cdots,n$. 
Thus, combining (\ref{delta w2}), (\ref{delta w3}), (\ref{eq:uwvk}) and the fact that $u_{i}^H\Delta u_{i}=-\frac{\Delta u_{i}^H\Delta u_{i}}{2}$ and $ v_{i}^H\Delta v_{i}=-\frac{\Delta v_{i}^H\Delta v_{i}}{2}$, which are proved in Section \ref{sec:u and v},  we have
\begin{align*}
    \Delta W_{2}
&= \begin{bmatrix}
	\frac{-\Vert \Delta w_1\Vert_{2}^2}{2} & 0 & 0 & \cdots & 0 \\
	0 & \frac{-\Vert \Delta w_2\Vert_{2}^2}{2} & 0 & \cdots & 0 \\
	0 & 0 & \frac{-\Vert \Delta w_3\Vert_{2}^2}{2} & \cdots & 0 \\
	\vdots & \vdots & \vdots & \ddots & \vdots \\
	0 & 0 & 0 & \cdots & \frac{-\Vert \Delta w_n\Vert_{2}^2}{2} \\
\end{bmatrix}, \\
\Delta W_{3}&=
\begin{bmatrix}
	0 & \Delta w_{1}^H\Delta w_{2} & \Delta w_{1}^H\Delta w_{3} & \dots & \Delta w_{1}^H\Delta w_{n} \\
	0 & 0 & \Delta w_{2}^H\Delta w_{3} & \dots & \Delta w_{2}^H\Delta w_{n} \\
	0 & 0 & 0 & \dots & \Delta w_{3}^H\Delta w_{n} \\
	\vdots & \vdots & \vdots & \ddots & \vdots\\
	0 & 0 & 0 & \dots & 0 
\end{bmatrix}.
\end{align*}
Similarly, the matrices $\Delta K_2$ and $\Delta K_3$ have the same forms as the matrices $\Delta W_2$ and $\Delta W_3$ except that $\Delta w_i$ is replaced by $\Delta k_i$. Thus, considering (\ref{eq:sec3.6}), we get
\begin{align*}
	\vert \Delta W\vert&=[\vert \Delta w_1\vert,\vert \Delta w_2\vert,\cdots,\vert \Delta w_n\vert]\leq \vert \Delta W_1\vert+\vert \Delta W_2\vert+\vert \Delta W_3\vert \\
	&=\begin{bmatrix}
		0 & \vert x_{1} \vert & \vert x_{2}\vert & \dots & \vert x_{n-1} \vert \\
		\vert x_{1} \vert & 0 & \vert x_{n} \vert & \dots & \vert x_{2n-3} \vert \\
		\vert x_{2} \vert & \vert x_{n} \vert & 0 & \dots & \vert x_{3n-6} \vert \\
		\vdots & \vdots & \vdots & \ddots & \vdots\\
		\vert x_{n-1} \vert & \vert x_{2n-3} \vert &
		\vert x_{3n-6} \vert & \dots & 0
	\end{bmatrix}+\begin{bmatrix}
		\frac{\Vert \Delta w_1\Vert_{2}^2}{2} & 0 & 0 & \cdots & 0 \\
		0 & \frac{\Vert \Delta w_2\Vert_{2}^2}{2} & 0 & \cdots & 0 \\
		0 & 0 & \frac{\Vert \Delta w_3\Vert_{2}^2}{2} & \cdots & 0 \\
		\vdots & \vdots & \vdots & \ddots & \vdots \\
		0 & 0 & 0 & \cdots & \frac{\Vert \Delta w_n\Vert_{2}^2}{2} \\
	\end{bmatrix} \\
	&+\begin{bmatrix}
		0 & \vert \Delta w_{1}^H\vert \vert \Delta w_{2} \vert&\vert \Delta w_{1}^H\vert\vert \Delta w_{3}\vert & \dots & \vert\Delta w_{1}^H\vert\vert\Delta w_{n}\vert \\
		0 & 0 & \vert \Delta w_{2}^H\vert\vert\Delta w_{3} \vert& \dots & \vert\Delta w_{2}^H\vert\vert\Delta w_{n}\vert \\
		0 & 0 & 0 & \dots & \vert\Delta w_{3}^H\vert\vert\Delta w_{n}\vert \\
		\vdots & \vdots & \vdots & \ddots & \vdots\\
		0 & 0 & 0 & \dots & 0 
	\end{bmatrix}.
\end{align*}
From the corresponding columns of the above inequality, we have 
\begin{equation}\label{w_iterative}
	\begin{cases}
		\vert \Delta w_{1}\vert=\vert \Delta W_{11}\vert+\vert \Delta W_{21}\vert ,\\
		\vert \Delta w_{j}\vert\leq \vert S_j^w\vert^{-1}(\vert \Delta W_{1j}\vert+\vert \Delta W_{2j}\vert) ,\\
		\vert S_j^w\vert=I_n-\begin{bmatrix}
			\vert \Delta w_{1}^H\vert \\ \vert \Delta w_{2}^H\vert \\ \vdots \\ \vert \Delta w_{j-1}^H\vert \\ 0 \\ \vdots \\0
		\end{bmatrix}=\begin{bmatrix}
			e_1^T-\vert \Delta w_{1}^H\vert \\ e_2^T-\vert \Delta w_{2}^H\vert \\ \vdots \\ e_{j-1}^T-\vert \Delta w_{j-1}^H\vert \\ e_j^T \\ \vdots \\e_n^T
		\end{bmatrix},
	\end{cases}
\end{equation}
where $j=2,3,\cdots,n$ and $\Delta W_{1j}$ and  $\Delta W_{2j}$ are the $j$th column of $\Delta W_{1}$ and $\Delta W_{2}$, respectively. 
Similarly, we can also obtain 
\begin{equation}\label{k_iterative}
	\begin{cases}
		\vert \Delta k_{1}\vert=\vert \Delta K_{11}\vert+\vert \Delta K_{21}\vert ,\\
		\vert \Delta k_{j}\vert\leq \vert S_j^k\vert^{-1}(\vert \Delta K_{1j}\vert+\vert \Delta K_{2j}\vert), \\
		\vert S_j^k\vert=I_n-\begin{bmatrix}
			\vert \Delta k_{1}^H\vert \\ \vert \Delta k_{2}^H\vert \\ \vdots \\ \vert \Delta k_{j-1}^H\vert \\ 0 \\ \vdots \\0
		\end{bmatrix}=\begin{bmatrix}
			e_1^T-\vert \Delta k_{1}^H\vert \\ e_2^T-\vert \Delta k_{2}^H\vert \\ \vdots \\ e_{j-1}^T-\vert \Delta k_{j-1}^H\vert \\ e_j^T \\ \vdots \\e_n^T
		\end{bmatrix},
	\end{cases}
\end{equation}
where $j=2,3,\cdots,n$ and $\Delta K_{1j}$ and  $\Delta K_{2j}$ are the $j$th column of $\Delta K_{1}$ and $\Delta K_{2}$, respectively.
Furthermore, from (\ref{eq:uwvk}), the estimates of $\vert \Delta_{l}^x\vert$ and $\vert \Delta_{l}^y\vert$ can be written as
\begin{equation} \label{2th of basic perturbation}
	\begin{cases}
		\vert \Delta_{l}^x\vert=\sum_{k=1}^{j}\vert t_{kj}\vert\vert \Delta w_i^H\vert\vert \Delta w_k\vert+\vert \Delta w_i^H\vert\vert U^HAV\vert\vert \Delta k_j\vert ,\\
		\vert \Delta_{l}^y\vert=\sum_{k=1}^{j}\vert r_{kj}\vert\vert \Delta w_i^H\vert\vert \Delta w_k\vert+\vert \Delta w_i^H\vert\vert U^HBV\vert\vert \Delta k_j\vert,
	\end{cases}
\end{equation}
where $1\leq j<i\leq n$ and $l=i+(j-1)n-\frac{j(j+1)}{2}$. 
Thus, putting (\ref{eq:sec2.9}), (\ref{w_iterative}), (\ref{k_iterative}) and (\ref{2th of basic perturbation}) together, we can use  the aforementioned iterative approach  to obtain the nonlinear bounds of  $x$, $y$, $\Delta W$ and $\Delta K$, i.e.,  $\vert x_{nl}\vert$, $\vert y_{nl}\vert$, $\vert \Delta W_{nl}\vert$ and $\vert \Delta K_{nl}\vert$, respectively. The details are summarized in Algorithm \ref{algorithm}.
\begin{algorithm}[ht] 
	\caption{Nonlinear perturbation bounds} 
	\label{algorithm}
	\hspace*{0.02in} {\bf Input:} 
	maximum number $q$ of iterations, tolerance $\epsilon$ \\
	\hspace*{0.02in} {\bf Output:} 
	nonlinear bounds $\vert x_{nl}\vert, \vert y_{nl}\vert, \vert \Delta W_{nl}\vert$ and $\vert \Delta K_{nl}\vert$. 
	\begin{algorithmic}[1]
		\State Let $\vert x_{nl}\vert=0_{\nu\times 1}$, $\vert y_{nl}\vert=0_{\nu\times 1}$, $\vert \Delta W_{nl}\vert=0_{n\times n}$ and $\vert \Delta K_{nl}\vert=0_{n\times n}$.
		\For{iteration$=1,2,\dots,q$}
		\State Combining $\vert x_{nl}\vert$ and $\vert y_{nl}\vert$ with (\ref{Delta W1}) and (\ref{Delta K1}), we can obtain $\vert \Delta W_1\vert$ and $\vert \Delta K_1\vert$.
		\State Let $\vert \Delta W_2\vert=0_{n\times n}$ and $\vert \Delta K_2\vert=0_{n\times n}$.
		\For{$i=1,2,\dots,n$}
		\State $\vert \Delta W_2\vert_{i,i}=\frac{1}{2}\Vert\vert \Delta W_{nl}\vert_{:,i}\Vert_{2}^2$, 
		$\vert \Delta K_2\vert_{i,i}=\frac{1}{2}\Vert\vert \Delta K_{nl}\vert_{:,i}\Vert_{2}^2$.
		\EndFor
		\State Computing $\vert \Delta W_{nl}\vert_{:,1}=\vert \Delta W_1\vert_{:,1}+\vert \Delta W_2\vert_{:,1}$ and  
		$\vert \Delta K_{nl}\vert_{:,1}=\vert \Delta K_1\vert_{:,1}+\vert \Delta K_2\vert_{:,1}$.
		\State Let $S_w=I_n$ and $S_k=I_n$.
		\For{$i=2,3,\dots,n$}
		\State $S_{w(i-1,:)}=S_{w(i-1,:)}-\vert \Delta W_{nl}\vert_{:,i-1}^H$, \State $S_{k(i-1,:)}=S_{k(i-1,:)}-\vert \Delta K_{nl}\vert_{:,i-1}^H$,
		\State $\vert \Delta W_{nl}\vert_{:,i}=S_w^{-1}(\vert \Delta W_1\vert_{:,i}+\vert \Delta W_2\vert_{:,i})$, 
		\State $\vert \Delta K_{nl}\vert_{:,i}=S_k^{-1}(\vert \Delta K_1\vert_{:,i}+\vert \Delta K_2\vert_{:,i})$.
		\EndFor
		\State Let $\vert\Delta^x\vert=0_{\nu\times 1}$, $\vert\Delta^y\vert=0_{\nu\times 1}$ and $l=0$.
		\For{$j=1,2,\dots,n-1$}
		\For{$i=j+1,j+2,\dots,n$}
		\State $l=l+1$,
		\State $\vert \Delta_{l}^x\vert=\sum_{k=1}^{j}\vert t_{kj}\vert\vert \Delta W_{nl}\vert_{:,i}^H\vert \Delta W_{nl}\vert_{:,k}+\vert \Delta W_{nl}\vert_{:,i}^H\vert U^HAV\vert\vert \Delta K_{nl}\vert_{:,j}$,
		\State $\vert \Delta_{l}^y\vert=\sum_{k=1}^{j}\vert r_{kj}\vert\vert \Delta W_{nl}\vert_{:,i}^H\vert \Delta W_{nl}\vert_{:,k}+\vert \Delta W_{nl}\vert_{:,i}^H\vert U^HBV\vert\vert \Delta K_{nl}\vert_{:,j}$.
		\EndFor
		\EndFor 
		\State Let $x=\vert x_{nl}\vert$ and $y=\vert y_{nl}\vert$.
		\State Computing $\begin{bmatrix}
			\vert x_{nl}\vert \\ \vert y_{nl}\vert
		\end{bmatrix}=\begin{bmatrix}
			x \\y \\
		\end{bmatrix}_{lin}+\vert L^{-1}\vert \begin{bmatrix}
			\vert \Delta^x \vert \\ \vert \Delta^y \vert \\ 
		\end{bmatrix}$.
		\State If $\frac{\left\Vert \begin{bmatrix}
				x \\y \\
			\end{bmatrix}-\begin{bmatrix}
				\vert x_{nl}\vert \\ \vert y_{nl}\vert
			\end{bmatrix}\right\Vert_2 }{\left\Vert \begin{bmatrix}
				x \\y \\
			\end{bmatrix}\right\Vert_2}<\epsilon$ then break.
		\EndFor
	\end{algorithmic}
\end{algorithm}

Based on Algorithm \ref{algorithm}, we can find the nonlinear perturbation bounds for generalized Schur decomposition. Specifically, considering (\ref{eq:sec3.7}), we have the following nonlinear perturbation bounds of $ U$ and $ V$:
\begin{equation}
		\begin{cases}
			\vert \Delta U_{nl} \vert =\vert U\vert\vert \Delta W_{nl}\vert, \\
			\vert \Delta V_{nl} \vert = \vert V\vert\vert \Delta K_{nl}\vert .
		\end{cases}
		\label{nonlinear1}
	\end{equation}
Using (\ref{4.3}) and (\ref{4.4}), we obtain the nonlinear perturbation bounds of $T$ and $R$:
\begin{equation}
		\begin{cases}
			\begin{aligned}
				\vert \Delta T_{nl} \vert ={\rm triu}&(\vert U^H\vert (\vert A\vert+\vert \Delta  A\vert)\vert V\vert \vert \Delta K_{nl}\vert+\vert \Delta W_{nl}^H\vert\vert U^H\vert (\vert A\vert+\vert \Delta  A\vert)\vert V\vert \\
				&+\vert \Delta W_{nl}^H\vert\vert U^H\vert (\vert A\vert+\vert \Delta  A\vert)\vert V\vert\vert \Delta K_{nl}\vert+\vert U^H\vert\vert \Delta  A\vert\vert V\vert), 
			\end{aligned}	 \\
			\begin{aligned}
				\vert \Delta R_{nl} \vert={\rm triu}&(\vert U^H\vert (\vert B\vert+\vert \Delta  B\vert)\vert V\vert \vert \Delta K_{nl}\vert+\vert \Delta W_{nl}^H\vert\vert U^H\vert (\vert B\vert+\vert \Delta  B\vert)\vert V\vert \\
				&+\vert \Delta W_{nl}^H\vert\vert U^H\vert (\vert B\vert+\vert \Delta  B\vert)\vert V\vert\vert \Delta K_{nl}\vert+\vert U^H\vert\vert \Delta  B\vert\vert V\vert).
			\end{aligned}	
		\end{cases}	
		\label{nonlinear2}
	\end{equation}
In addition, considering (\ref{6.3}), we have the nonlinear perturbation bound for the  $p$-dimensional generalized invariant subspace: 
\begin{equation}
		\Theta_{{\rm max},nl}={\rm arcsin}(\Vert \vert\Delta K_{nl}\vert_{p+1:n,1:p}\Vert_{2}).
		\label{nonlinear3}
	\end{equation}
At last, considering (\ref{second order diagonal}), 
the bounds for the second order terms in the diagonal elements of $T$ and $R$  can be obtained
\begin{equation}
		\begin{cases}
			\vert \Delta_i^{dt}\vert=\sum_{k=1}^{j}\vert t_{kj}\vert\vert \Delta W_{nl}\vert_{:,i}^H\vert \Delta W_{nl}\vert_{:,k}+\vert \Delta W_{nl}\vert_{:,i}^H\vert U^HAV\vert\vert \Delta K_{nl}\vert_{:,i}, \\
			\vert \Delta_i^{dr}\vert=\sum_{k=1}^{j}\vert r_{kj}\vert\vert \Delta W_{nl}\vert_{:,i}^H\vert \Delta W_{nl}\vert_{:,k}+\vert \Delta W_{nl}\vert_{:,i}^H\vert U^HBV\vert\vert \Delta K_{nl}\vert_{:,i},
		\end{cases}
		\label{nonlinear4}
	\end{equation}
	\begin{equation}
		\vert d_t\vert=\frac{1}{2}\begin{bmatrix}
			\vert t_{11}\vert(\Vert\vert \Delta W_{nl}\vert_{:,1}\Vert_{2}^2+\Vert\vert \Delta K_{nl}\vert_{:,1}\Vert_{2}^2)\\
			\vert t_{22}\vert(\Vert\vert \Delta W_{nl}\vert_{:,2}\Vert_{2}^2+\Vert\vert \Delta K_{nl}\vert_{:,2}\Vert_{2}^2)\\
			\vdots \\
			\vert t_{nn}\vert(\Vert\vert \Delta W_{nl}\vert_{:,n}\Vert_{2}^2+\Vert\vert \Delta K_{nl}\vert_{:,n}\Vert_{2}^2)\\
		\end{bmatrix},
		\vert d_r\vert=\frac{1}{2}\begin{bmatrix}
			\vert r_{11}\vert(\Vert\vert \Delta W_{nl}\vert_{:,1}\Vert_{2}^2+\Vert\vert \Delta K_{nl}\vert_{:,1}\Vert_{2}^2)\\
			\vert t_{22}\vert(\Vert\vert \Delta W_{nl}\vert_{:,2}\Vert_{2}^2+\Vert\vert \Delta K_{nl}\vert_{:,2}\Vert_{2}^2)\\
			\vdots \\
			\vert r_{nn}\vert(\Vert\vert \Delta W_{nl}\vert_{:,n}\Vert_{2}^2+\Vert\vert \Delta K_{nl}\vert_{:,n}\Vert_{2}^2)\\
		\end{bmatrix},
		\label{nonlinear5}
	\end{equation}
where $i=1,2,\cdots,n$. Then we can obtain the nonlinear perturbation bounds for the diagonal elements of $T$ and $R$.

In summary, we have the following theorem.
\begin{theorem} \label{7.1}
	Let $(A,B)$ and $(\widetilde{A},\widetilde{B})$ have the generalized Schur decompositions in (\ref{eq:sec1.1}) and (\ref{eq:sec2.1}), respectively. 
	Then the nonlinear perturbation bounds for $U$ and $V$, $T$ and $R$, and the $p$-dimensional generalized invariant subspace are given in  (\ref{nonlinear1}), (\ref{nonlinear2}) and (\ref{nonlinear3}) respectively, and the bounds for the second order terms in the diagonal elements of $T$ and $R$ are given in (\ref{nonlinear4}) and (\ref{nonlinear5}).
\end{theorem}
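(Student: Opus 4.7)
The plan is to treat Theorem \ref{7.1} as the assembly of two independent pieces: first, a convergent iterative procedure (Algorithm \ref{algorithm}) that produces nonlinear bounds $\vert x_{nl}\vert$, $\vert y_{nl}\vert$, $\vert\Delta W_{nl}\vert$, $\vert\Delta K_{nl}\vert$ for the basic perturbation vectors and the matrices $\Delta W,\Delta K$; and second, substitution of these bounds into the master bounds already established in Theorems \ref{theorem3.1}, \ref{theorem4.1}, \ref{theorem5.1} and \ref{theorem6.1}, where previously only the linear pieces $\vert\widehat{\Delta W}\vert$, $\vert\widehat{\Delta K}\vert$ were used. The key observation is that all the upstream bounds were derived \emph{without} discarding second-order terms; they only require an upper estimate of $\vert \Delta W\vert$ and $\vert \Delta K\vert$, so any valid bound may be inserted.

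For the first piece, I would start from the decompositions \eqref{eq:sec3.6} and the identities $\Delta u_i^H\Delta u_j=\Delta w_i^H\Delta w_j$ and $\Delta v_i^H\Delta v_j=\Delta k_i^H\Delta k_j$ that follow from unitarity of $U$ and $V$, together with the diagonal identities $u_i^H\Delta u_i=-\tfrac{1}{2}\Delta u_i^H\Delta u_i$ and the analogous statement for $v_i$. Applying the triangle inequality column-by-column to $\Delta W=\Delta W_1+\Delta W_2-\Delta W_3$ produces the lower-triangular system whose $j$-th column is \eqref{w_iterative}, and similarly \eqref{k_iterative} for $\Delta K$. This is the classical Stewart-style contraction: the diagonal blocks are $I_n$, so $\vert S_j^w\vert$ and $\vert S_j^k\vert$ are $M$-matrices whenever the accumulated perturbation is small, which gives well-defined column updates. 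Plugging the resulting $\vert\Delta W\vert,\vert\Delta K\vert$ into \eqref{2th of basic perturbation} yields upper estimates of $\vert\Delta^x\vert,\vert\Delta^y\vert$; inserting these into \eqref{eq:sec2.9} and using the nonsingularity of $L$ (recalled after \eqref{eq:sec2.9}) gives the update $\begin{bmatrix}\vert x_{nl}\vert\\\vert y_{nl}\vert\end{bmatrix}=\begin{bmatrix}x\\ y\end{bmatrix}_{lin}+\vert L^{-1}\vert\begin{bmatrix}\vert\Delta^x\vert\\\vert\Delta^y\vert\end{bmatrix}$ that appears in the final step of Algorithm \ref{algorithm}. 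Monotone convergence of this coupled iteration, under the standing smallness assumption on $\Vert[\Delta A,\Delta B]\Vert_F$, is what validates the numerical scheme.

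For the second piece the substitutions are essentially bookkeeping. The bound \eqref{nonlinear1} follows from $\vert\Delta U\vert\preceq\vert U\vert\vert U^H\Delta U\vert=\vert U\vert\vert\Delta W\vert$ used in the proof of Theorem \ref{theorem3.1}, but now with the full $\vert\Delta W_{nl}\vert$ in place of its linear part $\vert\widehat{\Delta W}\vert$; similarly for $V$. The bound \eqref{nonlinear2} is obtained by re-running the inequality \eqref{4.2} in the proof of Theorem \ref{theorem4.1} with $\vert\Delta U\vert,\vert\Delta V\vert$ replaced by $\vert U\vert\vert\Delta W_{nl}\vert$ and $\vert V\vert\vert\Delta K_{nl}\vert$; no term needs to be dropped. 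The subspace bound \eqref{nonlinear3} comes directly from \eqref{6.3} once $\vert\Delta K_{p+1:n,1:p}\vert$ is replaced by $\vert\Delta K_{nl}\vert_{p+1:n,1:p}$. Finally, the second-order residuals in \eqref{nonlinear4} and \eqref{nonlinear5} are read off from the defining expressions for $\Delta^{dt}_i,\Delta^{dr}_i,d_t,d_r$ in \eqref{second order diagonal}, using once again the unitary identities for $\Delta u_i^H\Delta u_k$ and $v_i^H\Delta v_i$ to rewrite everything in terms of $\vert\Delta W_{nl}\vert$ and $\vert\Delta K_{nl}\vert$.

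The main obstacle, and really the only nontrivial point, is justifying that the matrices $\vert S_j^w\vert$ and $\vert S_j^k\vert$ in \eqref{w_iterative}--\eqref{k_iterative} are invertible with nonnegative inverse throughout the iteration, so that the recursion produces a valid (and monotone) sequence of upper bounds rather than merely a formal expression; this requires a smallness hypothesis on $\Vert[\Delta A,\Delta B]\Vert_F$ sufficient to keep the off-diagonal columns small enough that each $\vert S_j^w\vert$ remains an $M$-matrix. Once this is in place, every line of Algorithm \ref{algorithm} is an inequality that is preserved under iteration, and the remaining statements of Theorem \ref{7.1} are immediate substitutions into bounds already proved.
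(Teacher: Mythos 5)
Your proposal follows essentially the same route as the paper: derive the column-wise systems \eqref{w_iterative}--\eqref{k_iterative} and the second-order estimates \eqref{2th of basic perturbation} from the unitarity identities, iterate via \eqref{eq:sec2.9} as in Algorithm~\ref{algorithm}, and then substitute $\vert\Delta W_{nl}\vert,\vert\Delta K_{nl}\vert$ into the bounds already established in Theorems~\ref{theorem3.1}--\ref{theorem6.1}. The only difference is that you explicitly flag the invertibility of $\vert S_j^w\vert,\vert S_j^k\vert$ (the $M$-matrix/smallness condition) as needing justification, a point the paper leaves implicit; this is a refinement rather than a gap in your argument.
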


\section{Numerical experiments}
\label{sec:num}
All computations are performed in MATLAB R2018b. We compute the generalized Schur decomposition by the MATLAB function qz  
and set all numerical results to be eight decimal digits unless otherwise specified.

In the specific experiments, we consider the matrix pair 
\begin{align*}
    A=\begin{bmatrix*}[r]
	-5.9 & -2.9 & -4.1 & -1.3 &  -11 \\
   5.1 &  -19.5 &  -6  &  -5.1  &  0 \\
  -5.9 &  -1  &  -2.5 &  1  &   -10.1 \\
  -16  &  4  &  -20 & 0 & -2  \\
   6.5 &  -4.5 & -2.1  & -14.5& -2.1
\end{bmatrix*},
B=\begin{bmatrix*}[r]
	4.5 &  4  &   0.5 &  7.9 & -1.1 \\
   3.9  & 5.5 &  4.9 & -5.5 &  1.4 \\
  -5.9 &  0.1 &  7.4 &  2.9 &  1.1 \\
  -4 &  5.4 & -4.1 &  0.5 &  5.9 \\
   3.9 & -4.5 &  1.1 &  1.1 &  7.9
\end{bmatrix*},
\end{align*}
whose eigenvalues are
\begin{align}
    \lambda_{1}=2.71596442, \lambda_{2}=1.41815971,
    \lambda_{3}=-0.01627420,   \lambda_{4}=-1.24264120, \lambda_{5}=-2.43201588, \notag
\end{align}
and let
\begin{align*}
    \Delta A=10^{-6}\times\begin{bmatrix*}[r]
	3 & -2 & 1 & 4 & -1 \\
    1 & 0 & -5 & -4 & 2\\
    -4 & 9 & -3 & 6 & 1\\
    -1 & 0 & 3 & 5 & 4\\
    7 & 3 & 2 & 8 & -2
\end{bmatrix*}, 
\Delta B=10^{-6}\times\begin{bmatrix*}[r]
   1 & 2 & 3 & -2 & 6\\
  -2 & 7 & -4 & 3 & 0\\
   8  & -2 & 4 & 5 & 1\\
   -6 & 0 & -3 & 1 & 8\\
    -5 & -2 & 3 & -9 & 1
\end{bmatrix*}.
\end{align*}

\subsection{On the linear bounds of $x$ and $y$}
By (\ref{eq:sec2.10}), the linear bounds of $x$ and $y$ are given by
\begin{align}
    \vert x_{lin}\vert=10^{-5}\times\begin{bmatrix}
	0.17419294 \\ 0.40018945 \\
	0.10935913\\ 0.12034032 \\
	\overline{0.14269152} \\
	0.12535076 \\0.40248315\\
	\overline{0.21364869} \\ 0.20105311  \\
	\overline{0.38402942}
\end{bmatrix},
\vert y_{lin}\vert=10^{-5}\times\begin{bmatrix}
	0.18192223\\ 0.65324092\\
	0.31267546\\ 0.21624433 \\
	\overline{0.20683921} \\
	0.32197838 \\0.65547555\\
	\overline{ 0.36200403} \\ 0.21289352 \\
	\overline{0.24169720}
\end{bmatrix}, \notag
\end{align}
which compared with the exact absolute value of $x$ and $y$ 
\begin{align}
    \vert x\vert =10^{-5}\times\begin{bmatrix}
	0.00074295\\ 0.12082488\\
	0.03316662\\ 0.02389607 \\
	\overline{0.00734417} \\
	0.02065558 \\0.03259031\\
	\overline{ 0.01503943} \\ 0.01483496 \\
	\overline{0.03244088}
\end{bmatrix},
\vert y\vert =10^{-5}\times\begin{bmatrix}
	0.04106670\\ 0.19483908\\
	0.06753171 \\0.04902623 \\
	\overline{0.04312303} \\
	0.00998254 \\ 0.02704853 \\
	\overline{ 0.05224751}\\
	0.04857184\\ \overline{ 0.05275304}
\end{bmatrix}, \notag
\end{align}
implies that the linear bounds of $x$ and $y$ are mostly an order 
of magnitude larger than the corresponding exact values.  

\subsection{On the linear perturbation bounds of $U$ and $V$}
Based on (\ref{eq:sec3.7}),  the linear perturbation bounds of $U$ and $V$ are 
\begin{align*}
    \vert\widehat{\Delta U}\vert=10^{-5}\times \begin{bmatrix}
	0.38464875 &  0.27246825 &  0.29945131 &  0.26729935 &  0.55913832 \\
    0.26374593 &  0.33476081 &  0.38962465 &  0.38790461 &  0.38347382 \\
    0.38583882 &  0.27766067  & 0.29551131 &  0.25619419 &  0.53648226 \\
    0.24587855 &  0.21005819 &  0.38563454 &  0.23109513 &  0.34735174 \\
    0.32841755 &  0.44142030 &  0.35890629 &  0.46268755 &  0.27965613
\end{bmatrix}, \\
\vert\widehat{\Delta V}\vert=10^{-5}\times \begin{bmatrix}
	0.38424572 &  0.41249836 &  0.74084379 &   0.35938612 &  0.50724759 \\
    0.36311177 &  0.54870742 &  0.64496767 &   0.50670545 &  0.59384236 \\
    0.58005598 &  0.38020000 &  0.62000334 &   0.49848125 &  0.66018186 \\
    0.65402225 &  0.71478018 &  0.49806233 &   0.51246907 &  0.36743149 \\
    0.68978250 &  0.56860387 &  0.39610912 &   0.55848956 &  0.56426987
\end{bmatrix}.
\end{align*}
While the exact perturbations of $U$ and $V$ are 
\begin{align*}
    \vert \Delta U\vert=10^{-5}\times \begin{bmatrix}
	0.03271947 &  0.01004977 &  0.00134424 &  0.02137930 &  0.03189687  \\
    0.00412770 &  0.01651989 &  0.03445908 &  0.00637473 &  0.01312251  \\
    0.08702525 &  0.01759884 &  0.02623930 &  0.01757783 &  0.01775241  \\
    0.05253147 &  0.00445104 &  0.10172491 &  0.02620602  & 0.03710982 \\
    0.06963978  & 0.02897990 &  0.05360180 &  0.03621912 &  0.00487242
\end{bmatrix}, \\
\vert \Delta V\vert=10^{-5}\times\begin{bmatrix}
	0.08740596 &  0.01411635 &  0.17705352 &  0.03493969 &  0.01064313 \\
    0.05844298 &  0.03364732 &  0.07484455 &  0.00846659 &  0.02244902 \\
    0.10897767 &  0.04577163 &  0.08868471 &  0.06560471 &  0.04119389 \\
    0.11657303 &  0.02633445 &  0.00605251 &  0.02382701 &  0.05804865 \\
    0.10045922  & 0.01604350 &  0.00772145 &  0.06331427 &  0.05092626
\end{bmatrix}.
\end{align*}
Comparing $\vert\widehat{\Delta U}\vert$ and $\vert\widehat{\Delta V}\vert$ with $\vert\Delta U\vert$ and $\vert\Delta V\vert$, respectively, we find that the elements of the former are mostly an order of magnitude larger than the latter.

\subsection{On the linear perturbation bounds of $T$ and $R$}
Using (\ref{4.3}) and (\ref{4.4}), the linear  perturbation bounds of $T$ and $R$ are
\begin{align*}
    \vert\widehat{\Delta T}\vert&=10^{-3}\times\begin{bmatrix}
 0.53691509 &  0.51512020 &  0.59550180 &  0.52017664  & 0.52944730 \\
 0  & 0.53438475 &  0.58334135 &  0.53135660 &  0.54212988 \\
 0     &              0 &  0.57265449 &  0.55323306 &  0.52551356 \\
 0      &             0   &                 0 &  0.40707800 &  0.38956929 \\
 0     &              0   &                0   &                0 &  0.48359999
\end{bmatrix}, \\
\vert\widehat{\Delta R}\vert&=10^{-3}\times \begin{bmatrix}
 0.32986610 &  0.31603168 &  0.33529134 &  0.31919222  & 0.31500156 \\
 0 &  0.34828626 &  0.36523305 &  0.33561782 &  0.33582967 \\
 0    &               0 &  0.37103687 &  0.34666968 &  0.34254328 \\
 0     &              0        &           0 &  0.27212787 &  0.26436216 \\
 0       &            0        &           0          &         0 &   0.33100323
\end{bmatrix}.
\end{align*}
And the exact perturbation of $T$ and $R$ are
\begin{align*}
    \vert\Delta T\vert=10^{-4}\times\begin{bmatrix}
	0.08915691  &  0.13322844  &  0.35968307   & 0.09510800  &  0.11156715 \\
    0  &  0.06238591 &   0.09816516  &  0.09494656  &  0.03412775 \\
    0      &              0   & 0.04598561  &  0.09234727   & 0.05254595 \\
    0       &             0      &              0   & 0.00708673  &  0.05780197 \\
    0      &              0      &              0      &              0  &  0.02090659
\end{bmatrix}, \\
\vert\Delta R\vert=10^{-4}\times\begin{bmatrix}
	0.03110455 &  0.05821619 &  0.09853095 &  0.02211806 &  0.00590593 \\ 
    0 &  0.05622772 &  0.01862746 &  0.00221196 &  0.00571853 \\
    0   &                0 &  0.03168093 &  0.02843183 &  0.00131302 \\
    0   &                0  &                 0 &  0.04257788 &  0.11412344 \\
    0    &               0  &                 0  &                 0 &  0.00949379
\end{bmatrix}.
\end{align*}
Comparing $\vert\widehat{\Delta T}\vert$ and $\vert\widehat{\Delta R}\vert$ with $\vert\Delta T\vert$ and $\vert\Delta R\vert$, respectively, we can see that the elements of the former are mostly two orders 
of magnitude larger than the latter. Hence, the bounds for $\Delta T$ and $\Delta R$ are a little loose.

\subsection{On the linear perturbation for diagonal elements of $T$ and $R$}
Considering (\ref{equ2}), the linear perturbation bounds of diagonal elements of $T$ and $R$ are 
\begin{align*}
    \vert\widehat{\Delta \lambda^{t}}\vert=10^{-4}\times\begin{bmatrix}
   0.41987455\\
   0.34753030\\
   0.35436582\\
   0.29611809\\
   0.30903830
\end{bmatrix},
\vert\widehat{\Delta \lambda^{r}}\vert=10^{-4}\times\begin{bmatrix}
   0.29619847\\
   0.29551746\\
   0.29574977\\
   0.29557107\\
   0.29573185
\end{bmatrix},
\end{align*}
which compared with the exact absolute value of vectors $\Delta \lambda^{t}$ and $\Delta \lambda^{r}$
\begin{align*}
    \vert\Delta \lambda^{t}\vert=10^{-5}\times\begin{bmatrix}
   0.89156905\\
   0.62385906\\
   0.45985610\\
   0.07086728\\
   0.20906590
\end{bmatrix},
\vert\Delta \lambda^{r}\vert=10^{-5}\times\begin{bmatrix}
   0.31104545\\
   0.56227722\\
   0.31680932\\
   0.42577879\\
   0.09493786
\end{bmatrix},
\end{align*}
implies that the linear perturbation bounds of $T$ and $R$ are an order of magnitude larger than the corresponding exact perturbation bounds.	

Combining with (\ref{define}),(\ref{sun})  and (\ref{our}), we denote the chordal metric by $C$, Stewart and Sun's upper bound by $C_{1}$ {and} our upper bound by $C_{2}$, {respectively.} 
In Table \ref{tab:table1}, we provide the corresponding results. For simplicity, we only keep four decimal digits in Table \ref{tab:table1}, from which, it is easy to see that $C_2$ is closer to $C$ than $C_1$ in most cases.
 That is, our bound is usually a little tighter. 
\begin{table}[H]
    \caption{Comparison of different bounds for generalized eigenvalues} \label{tab:table1}
    {\begin{tabular}{@{}cccc@{}}
    		\toprule
    		eigenvalue & $C$ & $C_{1}$ & $C_{2}$  \\
    		\midrule
    		$\langle 26.8689,9.8929\rangle$ & 0.0056e-06 & 0.1531e-05 &  0.1477e-05
    		\\
    		$\langle 14.3158,10.0946\rangle$ & 0.2991e-06 & 0.2545e-05 &   0.2522e-05
    		\\
    		$\langle -0.1630,10.0143\rangle$ & 0.0777e-06 & 0.3618e-05 &  0.3586e-05
    		\\
    		$\langle -12.4136,9.9897\rangle$ & 0.0358e-06 &  0.2296e-05&  0.2610e-05
    		\\
    		$\langle -24.0902,9.9054\rangle$ & 0.2907e-06 & 0.1853e-05 & 0.1501e-05 \\
    		\bottomrule
    \end{tabular}}
\end{table}

We denote our condition number by $\rm cond1$ (see (\ref{cond2})), Stewart and Sun's condition number by $\rm cond2$ (see (\ref{sun1})), Higham's normwise condition number by $\rm cond3$ (see (\ref{normcond})) and componentwise condition number by $\rm cond4$ (see (\ref{compcond})). In Table \ref{tab:table2}, we present the corresponding results for different eigenvalue $\lambda$. It should be pointed out that for Higham's condition numbers, we let $E=10^6 \vert\Delta A\vert$ and $F=10^6\vert \Delta B \vert$.
From Table \ref{tab:table2}, we find that our condition number $\rm cond1$ is very close to Stewart and Sun's condition number $\rm cond2$, and both of them are smaller than Higham's condition numbers  $\rm cond3$ and  $\rm cond4$. {It should be clarified here that we can't guarantee the condition number $\rm cond1$ is the smallest one. In fact, the main advantage of our condition number is that it can avoid computing the generalized eigenvectors.} 
\begin{table}[H]
	\caption{Comparison of different condition numbers for generalized eigenvalues} 	\label{tab:table2}
	{\begin{tabular}{@{}ccccc@{}}
			\toprule
			eigenvalue & $\rm cond1$ & $\rm cond2$ & $\rm cond3$ & $\rm cond4$  \\
			\midrule
			2.71596442 & 0.15422147 & 0.11254005 &  2.09032557 & 2.12148832
			\\
			1.41815971 & 0.18293612 & 0.12837272 &   2.58716541& 2.33175429
			\\
			-0.01627420 & 6.24938540 & 6.15018647 &  90.27311163 & 68.67522301
			\\
			-1.24264120 & 0.18445129 & 0.13082771 &  2.66006855 & 2.29093661
			\\
			-2.43201588 & 0.14979764 & 0.11252811 &  2.12456014 & 1.98238882  \\
			\bottomrule	
	\end{tabular}}
\end{table}

\subsection{On the linear perturbation bound of generalized invariant subspace}
Denoting the linear perturbation bound of generalized invariant subspace (\ref{perturbation of invariant}) by $\Theta_{{\rm max},lin}$ and the exact bounds by $\Theta_{\rm {max}}$, we obtain the values of ${\rm cond}(\Theta)$ (see (\ref{cond of invariant1})), $s$ (see (\ref{cond of invariant2})), ${\rm dif}^{-1}$ (see (\ref{dif})),  $\Theta_{{\rm max},lin}$ and $\Theta_{\rm {max}}$ in Table \ref{tab:table3} for different values of dimension $p$. The numerical results in this table show that
our condition numbers ${\rm cond}(\Theta)$ are close to the corresponding values $s$. Especially, when the dimension $p=1$, ${\rm cond}(\Theta)=s$.  And, both ${\rm cond}(\Theta)$ and $s$ are smaller than the quantity ${\rm dif}^{-1}$. 
For the linear perturbation bounds of generalized invariant subspace, the ones of the first two dimensions are tighter than those of the last two.
\begin{table}[H]
	\caption{Linear perturbation bounds for generalized invariant subspace}
	{\begin{tabular}{@{}cccccc@{}}
			\toprule
			$p$ & ${\rm cond}(\Theta)$ & $s$ & ${\rm dif}^{-1}$ & $\Theta_{{\rm max},lin}$ & $\Theta_{\rm {max}}$ \\
			\midrule
			1 & 0.22248707 & 0.22248707 &0.26021577 & 0.65737304e-05 & 0.21590008e-05
			\\
			2 & 0.23633828 & 0.22238147 & 0.25898751 &  0.69829862e-05 & 0.21759910e-05
			\\
			3 & 0.24462325 & 0.22391967 &0.26235478 & 0.72277789e-05 & 0.08860907e-05
			\\
			4 & 0.25778842
			&  0.22389717 &0.26240116 & 0.76167643e-05 & 0.09097967e-05
			\\
			\bottomrule	
	\end{tabular}}
	\label{tab:table3}
\end{table}

\subsection{On the nonlinear perturbation bounds}
The nonlinear bounds for the basic perturbation vectors $x$ and $y$ and the nonlinear perturbation bounds for the matrices $U$, $V$, $T$ and $R$, the diagonal elements of $T$ and $R$, and the generalized invariant subspace are shown below.
\begin{align*}
    &\vert x_{nl}\vert=10^{-5}\times \begin{bmatrix}
	0.17419716 \\ 0.40019720 \\
	0.10936250\\ 0.12034487 \\
	\overline{0.14269725} \\
	0.12535607 \\0.40249318\\
	\overline{0.21365725} \\ 0.20106171 \\
	\overline{0.38403956}
\end{bmatrix},
\vert y_{nl}\vert=10^{-5}\times \begin{bmatrix}
	0.18192650\\ 0.65325071\\
	0.31268299\\ 0.21625081\\
	\overline{0.20684619} \\
	0.32198895 \\0.65548856\\
	\overline{0.36201589} \\ 0.21290221 \\
	\overline{0.24170383}
\end{bmatrix},  \\
&\vert\Delta U_{nl}\vert=10^{-5}\times \begin{bmatrix}
0.38465809 &  0.27247827 &  0.29946417 &  0.26731138 &  0.55915736 \\
0.26375342 &  0.33477088 &  0.38963896 &  0.38791925 &  0.38348753 \\
0.38584808 &  0.27767090 &  0.29552359 &  0.25620577 &  0.53650104 \\
0.24588455 &  0.21006576 &  0.38564508 &  0.23110633 &  0.34736607 \\
0.32842632 &  0.44143317 &  0.35891934 &  0.46270372 &  0.27966863
\end{bmatrix}, \\
&\vert\Delta V_{nl}\vert=10^{-5}\times\begin{bmatrix}
  0.38425598 &  0.41251359 &  0.74086336  & 0.35940094 &  0.50726694 \\
  0.36312214 &  0.54872358 &  0.64498807 &  0.50672427 &  0.59386129 \\
  0.58006851 &  0.38021483 &  0.62002248 &  0.49850195 &  0.66020537 \\
  0.65403630 &  0.71479931 &  0.49807915 &  0.51248822 &  0.36744921 \\
  0.68979642 &  0.56862018 &  0.39612573 &  0.55851131 &  0.56429053
\end{bmatrix}, \\
&\vert\Delta T_{nl}\vert=10^{-3}\times\begin{bmatrix}
	0.53692793 &  0.51513549 &  0.59551860  & 0.52019392 &  0.52946463 \\
    0  & 0.53440164 &  0.58335942 &  0.53137568 &  0.54214841 \\
    0     &              0 &  0.57267282 &  0.55325312 &  0.52553250 \\
    0   &                0     &              0 &  0.40709403 &  0.38958362 \\
    0    &               0     &              0       &            0 &  0.48361764
\end{bmatrix}, \\
&\vert\Delta R_{nl}\vert=10^{-3}\times\begin{bmatrix}
	0.32987371 & 0.31604052 &  0.33530091 &  0.31920237 &  0.31501149 \\
    0  & 0.34829685 &  0.36524427 &  0.33562960 &  0.33584123 \\
    0  &                 0  & 0.37104867 &  0.34668223 &  0.34255543 \\
    0    &               0       &            0 &  0.27213833 &  0.26437218 \\
    0      &             0     &              0      &             0 &  0.33101511
\end{bmatrix}, \\
&\vert\Delta \lambda^{t}_{nl}\vert=10^{-4}\times\begin{bmatrix}
   0.41989720\\
   0.34755180\\
   0.35438670\\
   0.29612336\\
   0.30905865
\end{bmatrix},
\vert\Delta \lambda^{r}_{nl}\vert=10^{-4}\times\begin{bmatrix}
   0.29620829\\
   0.29552763\\
   0.29576128\\
   0.29557888\\
   0.29574460
\end{bmatrix},
\Theta_{{\rm max},nl} =10^{-5}\times\begin{bmatrix}
   0.77741161\\
   0.97532566\\
   0.89176900\\
   0.76169400
\end{bmatrix}.
\end{align*}
We can easily see that the above nonlinear bounds are a little larger than their linear counterparts.

\subsection{On the case  $B=I_n$}
We use the example from \cite{petkov2021componentwise} with $B=I_5$ to compare our results with the corresponding ones for the Schur decomposition from \cite{petkov2021componentwise}. Specifically, let 
	\begin{align*}
		A=\begin{bmatrix*}[r]
			2.5 & -0.4 & 1 & -0.1 & -0.9 \\
			2 & 1.1 & 0 & -0.1 &  -1.9  \\
			2.5 & 1.5 & -1 &  -1 & -3\\
			1.5 &  -1.4 & 2 & 1.9 & -0.9 \\
			-0.5 & -1.4 & 2 & 0.9 & 2.1
		\end{bmatrix*},
		B=I_5,
	\end{align*}
whose eigenvalues are
\begin{align}
	\lambda_{1}=2+i, \lambda_{2}=2-i,\lambda_{3}=1.1,   \lambda_{4}=1, \lambda_{5}=0.5, \notag
\end{align}
and let
\begin{align*}
	\Delta A=10^{-6}\times\begin{bmatrix*}[r]
		-3 & 1 & 7 & -4 & 1\\
		6 & 0 & 4 & 2 & 9 \\
		-3& -2 & 7 & 1 & -5 \\
		8& 6 & -9 & -3 & 4 \\
		7 & 4 & -3 & 2 & 6
	\end{bmatrix*}, \Delta B=0_{5\times 5}.
\end{align*}
The numerical results are reported in Tables \ref{tab:table4}, \ref{tab:table5} and  \ref{tab:table6},  where 
the results for the Schur decomposition are labeled by the subscript $P$. 
From these tables, we find that our bounds or condition numbers are worse than the corresponding ones for the Schur decomposition. Comfortingly, except for the bounds for invariant subspace, the orders of magnitude of our results and the corresponding ones for the Schur decomposition are the same. The main reason for the difference may be that the matrix $L$ defined in (\ref{L}) is different from the matrix $M$ given in \cite{petkov2021componentwise}. For this example, they can be written in the following form: 
\begin{align*}
    &L=\begin{bmatrix}
        -M_{{\rm slvec}}(T^T\otimes I_5)M_{{\rm slvec}}^T & M_{{\rm slvec}}(I_5\otimes T)M_{{\rm slvec}}^T \\
        -I_{10} & I_{10}
    \end{bmatrix}\in \mathbb{C}^{20\times 20},\\
    &M=M_{{\rm slvec}}(I_5\otimes T-T^T\otimes I_5)M_{{\rm slvec}}^T \in \mathbb{C}^{10\times 10}.
\end{align*}
As a result, their inverses are very different, which leads to the final bounds and condition numbers are also different. We find that this phenomenon also appears in \cite[p.294]{stewart1990matrix}.
\begin{table}[H]
	\caption{Comparison of bounds for basic perturbation vector $x$} 	
	\label{tab:table4}
	{\begin{tabular}{@{}cccc@{}}
			\toprule
			$\vert x_{lin} \vert$ & $\vert x_{lin,P}\vert$ & $\vert x_{nl} \vert$ & $\vert x_{nl,P} \vert$   \\
			\midrule
			0.00009686 & 0.00003954 & 0.00009791 & 0.00003972
			\\
			0.00003153 & 0.00001956 & 0.00003213 &  0.00001968 
			\\
			0.00003657 & 0.00002073 & 0.00003708 & 0.00002085 
			\\
			0.00001545 & 0.00001382 & 0.00001549 & 0.00001386
			\\
			0.00006733 & 0.00004203 & 0.00006966 & 0.00004258  \\
			0.00007867 & 0.00004553 & 0.00008057 & 0.00004601   \\
			0.00002619 & 0.00002343 & 0.00002631 & 0.00002350\\
			0.00231591 & 0.00159306 & 0.00238816 & 0.00161609  \\
			0.00009954 & 0.00008903 & 0.00010140 & 0.00008985 \\
			0.00008418 & 0.00007530 & 0.00008656 & 0.00007634  \\
			\bottomrule	
	\end{tabular}}
\end{table}
\begin{table}[H]
	\caption{Comparison of condition numbers and bounds for eigenvalue} 	
	\label{tab:table5}
	{\begin{tabular}{@{}cccccccc@{}}
			\toprule
			$i$ & $\lambda_i$ & $\rm cond1$ & ${\rm cond}_P$ & $\vert\widehat{\Delta \lambda^{t}}\vert$ & $\vert\widehat{\Delta \lambda^{t}}\vert_P$ & $\vert \Delta \lambda^{t}_{nl}\vert$ & $\vert \Delta \lambda^{t}_{nl,P}\vert$  \\
			\midrule
			1 & $2+i$ & 5.70106371 & 4.38748219 & 0.26195515e-03 & 0.10933549e-03 & 0.26206989e-03 & 0.10935099e-03
			\\
			2 & $2-i$ & 5.70106371 & 4.38748219 & 0.26195515e-03 &  0.10933549e-03 & 0.26215081e-03 & 0.10937285e-03
			\\
			3 & 1.1 & 5.57354954 & 3.46410162 & 0.12536949e-03 & 0.08632497e-03 & 0.14482223e-03 & 0.09232970e-03
			\\
			4 & 1 & 5.79583152 & 3.46410162  &  0.11951151e-03 & 0.08632497e-03 & 0.13826436e-03 & 0.09224436e-03
			\\
			5 & 0.5 & 15.10673598 & 6.32455532 & 0.17576902e-03 & 0.15760711e-03  & 0.17633044e-03 & 0.15793190e-03\\
			\bottomrule	
	\end{tabular}}
\end{table}
\begin{table}[H]
	\caption{Comparison of condition numbers and bounds for invariant subspace} 	
	\label{tab:table6}
	{\begin{tabular}{@{}ccccccc@{}}
			\toprule
			$p$ & ${\rm cond}(\Theta)$ & ${\rm cond}(\Theta)_P$ & $\Theta_{{\rm max},lin}$ & $\Theta_{{\rm max},lin,P}$ & $\Theta_{{\rm max},nl}$ & $\Theta_{{\rm max},nl,P}$ \\
			\midrule
			1 & 4.28902636 &  1.75098767 & 0.00010688 & 0.00004363 & 0.00012554 & 0.00005076
			\\
			2 & 4.44048306 &  2.24036494 & 0.00011066 &  0.00005582 & 0.00014758& 0.00007361
			\\
			3 & 93.20159245 &  13.65505303 & 0.00232257 & 0.00034028 & 0.00239584& 0.00034885
			\\
			4 & 6.11926717 &  4.80463834  &  0.00015249 & 0.00011973 & 0.00015593& 0.00012019
			\\
			\bottomrule	
	\end{tabular}}
\end{table}

\noindent {\bfseries Publisher’s Note} Springer Nature remains neutral with regard to jurisdictional claims in published maps and institutional affiliations.

\end{document}